\newtheorem{theorem}{Theorem}[section]
\newtheorem{corollary}[theorem]{Corollary}
\newtheorem{lemma}[theorem]{Lemma}
\theoremstyle{definition}
\theoremstyle{remark}
\newtheorem{example}[theorem]{Example}
\newtheorem{remark}[theorem]{Remark}
\numberwithin{equation}{section}
\newcommand{\rbra}{\left(}       \newcommand{\rket}{\right)}
\newcommand{\sbra}{\left[}       \newcommand{\sket}{\right]}
\newcommand{\cbra}{\left\{}      \newcommand{\cket}{\right\}}
\newcommand{\abra}{\left\langle} \newcommand{\aket}{\right\rangle}
\title{Reduced dynamics and Lagrangian submanifolds of symplectic manifolds}
\author[E. Garc\'ia-Tora\~no Andr\'es]{E.\ Garc\'ia-Tora\~no Andr\'es}
\address{Department of Mathematics, Ghent University\\
Krijgslaan 281, S22, B9000 Ghent (Belgium)}
\email{Eduardo.GToranoAndres@ugent.be}
\author[E. Guzm\'an]{E.\ Guzm\'an}
\address{ULL-CSIC Geometr\'ia Diferencial y Mec\'anica Geom\'etrica\\
         Dept. Matem\'atica Fundamental\\
         Universidad de La Laguna, ULL\\
         Avda. Astrof\'isico Fco. S\'anchez\\
         38206 La Laguna, Tenerife (Spain)}
\email{eguzman@ull.es}
\author[J.C. Marrero]{J.C.\ Marrero}
\address{ULL-CSIC Geometr\'ia Diferencial y Mec\'anica Geom\'etrica\\
         Dept. Matem\'atica Fundamental\\
         Universidad de La Laguna, ULL\\
         Avda. Astrof\'isico Fco. S\'anchez\\
         38206 La Laguna, Tenerife (Spain)}
\email{jcmarrer@ull.edu.es}
\author[T. Mestdag]{T.\ Mestdag}
\address{Department of Mathematics, Ghent University\\
Krijgslaan 281, S22, B9000 Ghent (Belgium)}
\email{Tom.Mestdag@ugent.be}
\thanks{\noindent E. Guzm\'an and J. C. Marrero acknowledge support from  MEC (Spain) Grants
 MTM2011-15725-E, MTM2012-34478,  the project of the Canary Government ProdID20100210 and the European Community IRSES-project GEOMECH-246981. E. Guzm\'an also wishes to thank the CSIC for a JAE-predoc grant. E. Garc\'ia-Tora\~no Andr\'es and T.\ Mestdag acknowledge support from the Research Foundation -- Flanders (FWO)}
\subjclass[2010]{53D05, 53D12, 70G65, 70H03, 70H05, 70H33}
\keywords{Marsden-Weinstein reduction, Tulczyjew's triple, Lagrangian submanifold, Hamilton-Poincar\'e equations, Lagrange-Poincar\'e equations}
\begin{document}

\begin{abstract} In this paper, we will see that the symplectic creed by Weinstein "everything is a Lagrangian submanifold" also holds for Hamilton-Poincar\'e and Lagrange-Poincar\'e reduction. In fact, we show that solutions of the Hamilton-Poincar\'e equations and of the Lagrange-Poincar\'e equations are in one-to-one correspondence with distinguished curves in a Lagrangian submanifold of a symplectic manifold. For this purpose, we will combine the concept of a Tulczyjew triple with Marsden-Weinstein symplectic reduction.
\end{abstract}

\maketitle

\tableofcontents

\section{Introduction}
Lagrangian and Hamiltonian mechanics can both be formulated in the context of symplectic geometry. For a Hamiltonian system, one may simply consider the canonical symplectic form on the cotangent bundle of the configuration space. For the case of a Lagrangian system, the regularity of the Lagrangian plays a role: if the Lagrangian is (hyper)regular, one may use the Legendre transformation to  pull back the canonical symplectic form to obtain a symplectic form on the tangent bundle, the so-called Poincar\'e-Cartan two-form. Less known is that even in the case when the Lagrangian is singular, there exist symplectic formulations of the dynamics. One such formulation is provided by the so-called Tulczyjew triple~\cite{Tlcz76a,Tlcz76b}, which consists of three (anti)symplectomorphic manifolds. We will provide all details when needed, but, briefly speaking, it describes the dynamics in terms of Lagrangian submanifolds of the spaces of the triple, and it ultimately provides a unified picture where both Hamiltonian and Lagrangian mechanics can be treated on the same footing (see~\cite{Tlcz76a,Tlcz76b} and Section \ref{MarWeSec}). The ideas behind this triple have been extended to more general structures (such as Lie algebroids~\cite{GrGrUr} or Dirac structures~\cite{GrGr2}) and to more general classes of systems (such as systems with constraints~\cite{GrGr}, time dependent systems~\cite{GuMa} and field theories~\cite{CaGuMa},~\cite{Gra},~\cite{LeMaSa}).

Reduction theories provide a way to benefit from symmetry properties of dynamical systems. One such theory is that of Lagrange-Poincar\'e reduction which, in a few words, uses the symmetry group of the dynamics to reduce Hamilton's principle. The Hamiltonian analogue of Lagrange-Poincar\'e reduction is Hamiltonian-Poincar\'e reduction. In the literature, there exist many distinct geometric models for the equations that result from this procedure, mostly for the case of a regular Lagrangian~\cite{CeMaRa,CeMaPeRa,LeMaMa,MC}. It has also been observed that the Lagrange-Poincar\'e equations may be considered as Euler-Lagrange equations on a Lie algebroid, for the case of the so-called Atiyah algebroid~\cite{GrGrUr,LeMaMa}. One of the objectives of this paper is to provide a new framework in which also the case of singular Lagrangians can be included. In~\cite{CeMaPeRa} Lagrange-Poincar\'e reduction and Hamilton-Poincar\'e reduction is said to be "outside the realm of symplectic (and Routh) reduction". The main goal of this paper is to show  Tulczyjew's ideas concerning dynamics on the one hand and symplectic reduction on the other hand can be combined to a model for Lagrange-Poincar\'e reduction and Hamilton-Poincar\'e reduction within a reduced Tulczyjew triple. The core idea behind the new triple is that it is purely composed of symplectic manifolds, as was its unreduced version. To do so, we will need to discuss first the reduction (via the Marsden-Weinstein procedure) of an invariant Lagrangian submanifold. Then, we will describe Hamilton-Poincar\'e and Lagrange-Poincar\'e equations in terms of Lagrangian submanifolds of symplectic manifolds. So, we may conclude that the symplectic creed as formulated by Weinstein \cite{We} in the form "everything is a Lagrangian submanifold" also holds in this theory.

In the literature one may find three seemingly related approaches. In~\cite{GrGrUr} the authors obtain a Tulczyjew triple in a Lie algebroid setting. If one applies these results to the case when the Lie algebroid is the Atiyah algebroid, one obtains rather a Poisson answer than a symplectic one. We will relate our approach to theirs in the last section. In a second approach~\cite{LeMaMa} one may find a different Tulczyjew  triple for Lie algebroids. This triple consists of so-called prolongation bundles of Lie algebroids, which are all so-called 'symplectic Lie algebroids'. The concept of a symplectic Lie algebroid is a generalization of a symplectic manifold to the level of a vector bundle, but not a genuine symplectic manifold in its own right. A third approach (in e.g.~\cite{MaYo-L,MaYo}) also deals with singular Lagrangian systems, but within the context of Dirac structures.

The paper is structured as follows. In Section~\ref{MarWeSec}, we recall some basic results on Tulczyjew's triple and on Marsden-Weinstein symplectic reduction. In Section~\ref{ReducedSection}, we show that an invariant Lagrangian submanifold of a symplectic manifold endowed with a Hamiltonian action may, under additional assumptions, be reduced to a Lagrangian submanifold of the reduced symplectic manifold. In Section~\ref{reducedDynamics}, we give a one-to-one correspondence between solutions of the Hamilton-Poincar\'e equations on the one hand, and distinguished curves in a Lagrangian submanifold of a symplectic manifold on the other hand. In Section \ref{Scases}, we discuss two interesting special cases: the case where the configuration space is the symmetry Lie group and the case where the configuration space is the product of the symmetry Lie group with a base manifold. In Section \ref{LP-reduction}, we prove that there exists a one-to-one correspondence between solutions of the Lagrange-Poincar\'e equations and distinguished curves in a Lagrangian submanifold of the same symplectic manifold as in the Hamiltonian side.
Finally, in Section~\ref{Equiv}, we show that, for a hyperregular Lagrangian, the corresponding Lagrangian submanifolds in the Lagrangian and Hamiltonian side coincide. The paper ends with our conclusions and with some directions for future research.


\section{Tulczyjew's triple and Marsden-Weinstein reduction}\label{MarWeSec}
We explain the main characteristics of the Tulczyjew triple in some detail. Let $Q$ be the configuration manifold of a mechanical system. A Lagrangian function $L$ on $TQ$ defines a submanifold $dL(TQ)$, which is Lagrangian with respect to the canonical symplectic structure $\omega_{TQ}$ on $T^*TQ$.

This submanifold can be mapped into $TT^*Q$ via the inverse of Tulczyjew's diffeomorphism
\begin{align*}
A_Q\colon TT^*Q&\to T^*TQ,\\
(q,p,\dot q,\dot p)&\mapsto (q,\dot q,\dot p,p).
\end{align*}
This map is a symplectomorphism when we consider on $TT^*Q$ the symplectic structure $\omega_Q^c$, which is given by the complete lift  of the canonical symplectic form $\omega_Q$ on $T^*Q$. Therefore, also $S_L = A_Q^{-1}(dL(TQ))$ is a Lagrangian submanifold. In \cite{Tlcz76a,Tlcz76b}, it is shown that solutions of the Euler-Lagrange equations are in one-to-one correspondence with curves in $S_L$ which are tangent lifts of curves in $T^*Q$.

In the Hamiltonian formulation it is possible to proceed in a similar way. Here, the Lagrangian submanifold   $dH(T^*Q)$ of $(T^*T^*Q, \omega_{T^*Q})$ may be mapped into $TT^*Q$ via the isomorphism vector bundle
\begin{align*}
\emph{b}_{\omega_Q}\colon TT^*Q&\to T^*T^*Q,\\
(q,p,\dot q,\dot p)&\mapsto (q,p,-\dot p,\dot q),
\end{align*}
which is  induced by the symplectic form $\omega_Q$. Since this map is an anti-symplectomorphism,  $S_H= \emph{b}_{\omega_Q}^{-1}(dH(T^*Q))$ is a Lagrangian submanifold of $(TT^*Q, \omega_Q^c)$. In fact, it is the image of the Hamiltonian vector field $X_H$. As in the Lagrangian case, solutions of the Hamilton equations are in one-to-one correspondence with curves in $S_H$ which are tangent lifts of curves in $T^*Q$.

The following diagram, which is known as Tulczyjew's triple,  illustrates the previous situation
\[
\xymatrix{
&S_L\ar@/^/[dr]&&S_H\ar@/_/[dl]&\\
(T^*TQ,\omega_{TQ})\ar[rd]^{\pi_{TQ}}&&(TT^*Q, \omega_{Q}^c)\ar[rr]^{\emph{b}_{\omega_{Q}}}\ar[ll]_{A_Q}\ar[ld]^{T\pi_Q}\ar[rd]_{\tau_{T^*Q}}&&(T^*T^*Q, \omega_{T^*Q})\ar[dl]_{\pi_{T^*Q}} \\
& TQ\ar[rr]^{\mathcal{F}_L}\ar[ul]<1ex>^{dL}\ar[rd]_{\tau_Q}&&(T^*Q, \omega_Q)\ar[ur]<-1ex>_{dH}\ar[ld]^{\pi_Q}&\\
&&Q&&
}
\]
Throughout the next sections, we shall often apply the Marsden-Weinstein reduction theorem. For completeness, we now give a concise outline of this technique. This will allow us to fix the notations used in the rest of the paper. For a detailed treatment of this topic, see~\cite{AbMr, MaWe}.

It is well known that if a Lie group $G$ acts freely and properly on a manifold $M$,  the space of orbits $M/G$ is a smooth manifold and $M$ is the total space of a principal $G$-bundle with bundle projection $\emph p_M\colon M\to M/G$.

An action $\phi\colon G\times M\to M$ of a Lie group $G$ on a symplectic manifold $(M, \omega)$ is called $G$-Hamiltonian if for each $g\in G$ the map $\phi_g\colon M \to M$ is a symplectomorphism (i.e. $\phi_g^*\omega = \omega$) and $\phi$ admits an $Ad^*$-equivariant momentum map $J\colon M \to \mathfrak{g}^*$. Here $Ad^*$-equivariance means
\[
J \rbra \phi_g(x) \rket =Ad^*_{g^{-1}}J(x), \quad  \mbox{for any $x \in M$, $g\in G$},
\]
where $Ad^*\colon G\times\mathfrak{g}^* \to \mathfrak{g}^*$ is the dual of the adjoint action. The momentum map $J$ guarantees that the infinitesimal generators $\xi_M$ of the action $\phi$ are globally  Hamiltonian vector fields: $\xi_M$ becomes a Hamiltonian vector field for the Hamiltonian function  $J_\xi\colon M\to \mathbb{R}$ defined as
\[
J_\xi(x)=\abra J(x),\xi\aket, \quad \mbox{for all $x\in M$ and $\xi \in \mathfrak{g}$},
\]
that is, $i_{\xi_M}\omega = dJ_\xi$. Throughout the paper, unless otherwise stated, we will impose the following two assumptions:
\begin{enumerate}[(1)]
\item We assume that $\mu\in \mathfrak{g}^*$ is a regular value of the momentum map, which guarantees that $J^{-1}(\mu)$ is an embedded submanifold of $M$. If we consider the isotropy group of $\mu$ with respect to the coadjoint action,
\[
G_\mu=\cbra g \in G\colon Ad^*_{g^{-1}}\mu=\mu \cket,
\]
one can prove that $G_\mu$ is a closed subgroup of $G$ which, due to the equivariance condition of the momentum map, leaves $J^{-1}(\mu)$ invariant. Thus, it makes sense to consider the $G_\mu$-action on $J^{-1}(\mu)$,
$$\phi_\mu\colon G_\mu\times J^{-1}(\mu) \to J^{-1}(\mu).$$
\item We will assume that $G_\mu$ acts freely and properly on $J^{-1}(\mu)$. Then, the space of orbits $J^{-1}(\mu)/G_\mu$ admits a manifold structure such that the canonical projection
\[
\emph{p}_{J^{-1}\mu}\colon J^{-1}(\mu)\to J^{-1}(\mu)/G_\mu
\]
is the bundle projection of a principal $G_{\mu}$-bundle. The main result in~\cite{MaWe} is that the reduced manifold $J^{-1}(\mu)/G_\mu$ admits a symplectic form $\omega_\mu$ characterized by the equation $\emph{p}_{J^{-1}(\mu)}^* \omega_\mu = i^*\omega$, where $i\colon J^{-1}(\mu)\to M$ is the canonical inclusion.
\end{enumerate}

\begin{remark}\label{rem:reduction} The following observations are important in the forthcoming sections:
\begin{enumerate}[i)]
\item In the presence of a $G$-action $\phi$ on $M$, it is customary to simply write $gx$ for $\phi_g(x)$. We will use this notation when there is no risk of confusion.
\item For the rest of the paper we will assume, unless otherwise stated, that all the actions are free and proper. Under these conditions, the assumptions on the regularity of the momentum map and on the freeness and properness of $\phi_\mu$ hold. Indeed in this case $J$ is a submersion and the induced action of $G_\mu$ on $J^{-1}(\mu)$ is free and proper, and it follows that the quotient space  $M/G$ is a manifold and that $\emph p_M\colon M \to M/G$ is the bundle projection of a principal $G$-bundle. Moreover, the connected component of $J^{-1}(\mu)/G_\mu$ may be identified with the symplectic leaf of the Poisson manifold $M/G$. Recall that the symplectic manifold $M$ induces a Poisson structure $\cbra .,.\cket_{M/G}$ on $M/G$ which is defined by
\[
\cbra f, h\cket_{M/G}\circ \emph p_M = \cbra f \circ \emph p_M, h \circ \emph p_M\cket_M, \quad \mbox{for all $f,h\in C^{\infty}(M/G)$},
\]
where $\cbra .,.\cket_M$ is the Poisson bracket on $M$ defined by the symplectic structure.
\vspace{0.2cm}
\item In what follows, we will mainly use Marsden-Weinstein reduction for the case where $\mu =0$. In that case, however, it actually coincides with coisotropic reduction (see~\cite{AbMr}).
\end{enumerate}
\end{remark}

\begin{example}\label{ex:cotangent reduction}
A typical example of Marsden-Weinstein reduction is cotangent bundle reduction. A $G$-action $\phi$ on $M$ may be lifted to a $G$-action $\phi^{T^*M}$ on $T^*M$ which is given by cotangent lifts:
\[
g\alpha_x = \big{(} T^*_{gx}\phi_{g^{-1}}\big{)}(\alpha_x) ,\quad \mbox{for all $\alpha_x\in T^*M$ and $g\in G$}.
\]
If $\phi$ is free and proper, then so is $\phi^{T^*M}$. This action preserves the Liouville one-form $\lambda_M$, and therefore it  also preserves the canonical symplectic form $\omega_M$ of the cotangent bundle $T^*M$. It admits an $Ad^*$-equivariant momentum map $J_{T^*M}\colon T^*M\to \mathfrak{g}^*$ given by
\[
\abra J_{T^*M}(\alpha_x),\xi\aket=\abra \alpha_x, \xi_M(x)\aket, \quad \mbox{for all $\alpha_x\in T^*M$ and $\xi \in \mathfrak{g}$}.
\]
As the assumptions of the aforementioned reduction apply,  it follows that $J_{T^*M}^{-1}(\mu)/G_\mu$ is a symplectic manifold. In~\cite{MaMiOrPeRa}, one finds a broad study of cotangent bundle reduction which characterizes the cases in which the reduced symplectic manifold $J_{T^*M}^{-1}(\mu)/G_\mu$ is again a cotangent bundle and in particular, it is shown that for $\mu =0$ one has an identification
$$\rbra J_{T^*M}^{-1}(0)/G, (\omega_M)_0 \rket  \cong \rbra T^*(M/G), \omega_{M/G}\rket.$$
The vector bundle isomorphism $\Psi_0 \colon J^{-1}_{T^*M}(0) /G\to T^*(M/G)$ (over the identity in $M/G$) which realizes this identification is characterized by the following condition:
$$\abra\Psi_0 \rbra \emph p_{J_{T^*M}^{-1}(0)}(\alpha_x)\rket, \rbra T_x\emph p_M\rket (v_x)\aket = \abra\alpha_x, v_x\aket, $$
for $\alpha_x\in J^{-1}_{T^*M}(0) $ and $v_x\in TM$. In fact,  $J^{-1}_{T^*M}(0)$ is identified with the annihilator $V^\circ \emph p_M$ of the vertical bundle $V\emph p_M$ of the canonical projection $\emph p_M\colon M \to M/G$. Thus,
$J^{-1}_{T^*M}(0)/G = V^\circ\emph{p}_M/G$ and the latter space is canonically identified with $T^*(M/G)$.\hfill$\lhd$
\end{example}
\vspace{0.2cm}

Finally, we recall that if two symplectomorphic manifolds are both Marsden-Weinstein reducible for the same
symmetry group and have compatible actions, then the reduced spaces are also symplectomorphism (see, for example, \cite{CaLaVa}).
More specifically, let $f\colon M_1\to M_2$ be a symplectomorphism  between the symplectic manifolds $(M_1, \omega_1)$ and $(M_2, \omega_2)$ and suppose that both $M_1$ and $M_2$ admit $G$-Hamiltonian actions with $Ad^*$-equivariant momentum maps $J_1$ and $J_2$ respectively. If $f$ is $G$-equivariant and $J_2 \circ f = J_1$, then for a fixed
value $\mu \in  \mathfrak{g}^*$ it follows that the reduced manifolds $J^{-1}_1(\mu)/G_\mu$ and $J^{-1}_2(\mu)/G_\mu$ are symplectomorphic, with symplectomorphism
\[
\sbra f_\mu \sket \colon \rbra J_1^{-1}(\mu) /G_\mu,  ~\omega_{1\mu} \rket \to \rbra J_2^{-1}(\mu)/ G_\mu, ~ \omega_{2\mu}\rket.
\]
In the next lines, we will briefly explain how this map is defined. Observing that the map $f\colon M_1 \to M_2$  preserves the momentum maps, it follows that $f \rbra J_1^{-1}(\mu)\rket  = J_2^{-1}(\mu)$  for each value $\mu\in \mathfrak{g}^* $. If we denote by
$f_\mu\colon J_1^{-1}(\mu) \to J_2^{-1}(\mu)$ the restriction of the map $f$ to the submanifold $J_1^{-1}(\mu)$, then $f_\mu$ is a $G_\mu$-equivariant diffeomorphism (because $f$ is $G$-equivariant) which therefore descends to the quotient. In other words, there exists a symplectomorphism
$$\sbra f_\mu\sket \colon J_1^{-1}(\mu)/G_\mu\to J_2^{-1}(\mu)/G_\mu,$$
which is defined by $\sbra f_\mu\sket (\emph p_{M_1}(x)) = \emph p_{M_2}(f(x))$, for all $x\in J_1^{-1}(\mu)$, where $\emph p_{M_i}\colon M_i\to M_i/G$ are  the canonical projections for $i \in \{1,2\}$.


\section{Reduced Lagrangian submanifolds}\label{ReducedSection}
In this section, we will prove a result which will be important for the rest of the paper. We will show that in the presence of a $G$-Hamiltonian action on a symplectic manifold $(M,\omega)$, a Lagrangian submanifold of $M$ can be reduced to a submanifold on the symplectic reduced space and that, under certain conditions, it retains its Lagrangian character.

We first need the following preparatory lemma.
\begin{lemma}\label{lemma:Quo}
Let $\phi\colon G\times M \to M$ be a (free and proper) action of a Lie group $G$ on a manifold $M$ and $S$ be a $G$-invariant embedded (respectively connected, closed) submanifold of $M$. Then the quotient manifold $S/G$ is a embedded (respectively connected, closed) submanifold of $M/G$.
\end{lemma}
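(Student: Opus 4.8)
The plan is to reduce everything to the local triviality of the principal $G$-bundle $\emph{p}_M\colon M\to M/G$ and to exploit $G$-invariance of $S$ at the one place where it is really needed. Write $\emph{p}_S\colon S\to S/G$ for the projection of the restricted action, which is again free and proper (so that $S/G$ is a manifold): a compact $K\subset S\times S$ is compact in $M\times M$, and its preimage under $(g,x)\mapsto(gx,x)$ is already contained in $G\times S$ because $S$ is $G$-invariant, hence compact. Let $\bar\iota\colon S/G\to M/G$ be the injective smooth map induced by the inclusion $\iota\colon S\hookrightarrow M$, so that $\bar\iota\circ\emph{p}_S=\emph{p}_M\circ\iota$ and $\bar\iota(S/G)=\emph{p}_M(S)$. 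The heart of the matter is to show that $\emph{p}_M(S)$ is an embedded submanifold of $M/G$, and then to check that $\bar\iota$ is a diffeomorphism onto it. The other two assertions will be immediate: if $S$ is connected, then $\emph{p}_M(S)=\emph{p}_M(\iota(S))$ is a continuous image of a connected space, hence connected; and if $S$ is closed, then since $\emph{p}_M$ is an open surjection a subset of $M/G$ is closed iff its preimage is, and $\emph{p}_M^{-1}(\emph{p}_M(S))=S$ by $G$-invariance, which is closed.

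First I would check that $\bar\iota$ is an immersion. Fix $x\in S$; since $S$ is invariant the orbit $Gx$ lies in $S$, so $\ker(d\emph{p}_S)_x=T_x(Gx)=\ker(d\emph{p}_M)_x\cap T_xS$, and a short diagram chase on $\bar\iota\circ\emph{p}_S=\emph{p}_M\circ\iota$ then yields injectivity of $d\bar\iota$. Moreover, once $\emph{p}_M(S)$ is known to be an embedded submanifold, both $\emph{p}_S$ and $\emph{p}_M|_S\colon S\to\emph{p}_M(S)$ are quotient maps for the same $G$-action and are surjective submersions, so $\bar\iota$ is automatically a diffeomorphism $S/G\to\emph{p}_M(S)$. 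Thus it remains to prove the local statement: near any $z_0=\emph{p}_M(x_0)$ with $x_0\in S$, the set $\emph{p}_M(S)$ is an embedded coordinate slice.

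For this I would choose a local section $\sigma\colon V\to M$ of $\emph{p}_M$ with $\sigma(z_0)=x_0$, put $W=\sigma(V)$, and use the $G$-equivariant diffeomorphism $G\times W\to\emph{p}_M^{-1}(V)$, $(g,y)\mapsto gy$, under which $\emph{p}_M$ becomes the second projection $G\times W\to W\cong V$. By $G$-invariance of $S$ the subset $S\cap\emph{p}_M^{-1}(V)$ corresponds to $G\times(W\cap S)$; as it is an embedded submanifold of the open set $\emph{p}_M^{-1}(V)$, so is $G\times(W\cap S)$ inside $G\times W$. Now the inclusion $\{e\}\times W\hookrightarrow G\times W$ is transverse to $G\times(W\cap S)$ — at a point $(e,y)$ the tangent space of the latter contains $T_eG\times\{0\}$, which together with $\{0\}\times T_yW$ spans $T_{(e,y)}(G\times W)$ — so the preimage $W\cap S$ is an embedded submanifold of $W$. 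Transporting through the diffeomorphism $\emph{p}_M|_W\colon W\to V$ and observing that $\emph{p}_M(W\cap S)=\emph{p}_M(S)\cap V$ (once more by invariance of $S$), one concludes that $\emph{p}_M(S)\cap V$ is an embedded submanifold of $V$, which proves the local statement and hence the lemma. The main obstacle is exactly this last step: the identity $S\cap\emph{p}_M^{-1}(V)=G\times(W\cap S)$ and the transversality argument rely essentially on $G$-invariance of $S$ — for a general embedded $S$ the image $\emph{p}_M(S)$ need not be a submanifold at all — and one must phrase the transversality so that no circular appeal to ``$W\cap S$ is a submanifold'' creeps in.
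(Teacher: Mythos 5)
Your proof is correct, but it takes a genuinely different route from the paper's. The paper works directly with the induced map $\widetilde{i}\colon S/G\to M/G$: smoothness is obtained from local sections of $p_S$, the immersion property is read off from the commutative square $\widetilde{i}\circ p_S=p_M\circ i$ (essentially the kernel computation you make explicit), and the topological embedding property is then checked by hand: an open $\widetilde U\subseteq S/G$ pulls back to $V\cap S$ for some open $V\subseteq M$, and the identity $p_M(V\cap S)=p_M(V)\cap p_M(S)$ — which is where $G$-invariance of $S$ enters — shows that $\widetilde{i}(\widetilde U)$ is relatively open in the image. You instead prove the stronger local statement that $p_M(S)$ is an embedded submanifold of $M/G$, by trivializing the principal bundle over $V$, identifying $S\cap p_M^{-1}(V)$ with $G\times(W\cap S)$ via invariance, and extracting $W\cap S$ as a transverse preimage; the induced map is then a bijective submersion onto this submanifold and hence a diffeomorphism. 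Your route costs the local triviality of $p_M$ and the transversality theorem, and its one delicate point is exactly the one you isolate: the inclusion $T_eG\times\{0\}\subseteq T_{(e,y)}\bigl(G\times(W\cap S)\bigr)$ must come from invariance of that set under left translation in the first factor, not from an as-yet-unproven product decomposition of its tangent space — as phrased, there is no circularity. What it buys is explicit slice charts for $S/G$ inside $M/G$ and a very transparent localization of where invariance is used; the paper's argument is shorter and purely point-set once the immersion property is in hand. The treatment of connectedness and closedness is the same in both (the paper merely asserts it), so I see no gap.
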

\begin{proof}
The action restricts to a (free and proper) action $ \phi_{S}\colon G\times S \to S$, and therefore, $S/G$ is a smooth manifold. We will denote by  $\emph p_S\colon S \to S/G$ the canonical projection, by $i\colon S \to M$  the canonical inclusion of $S$ on $M$ and by $\widetilde{i}\colon S/G \to M/G$ the corresponding inclusion between the quotient manifolds.

Since $\emph p_S\colon S\to S/G$ is a surjective submersion there exists, for all $x\in S$, an open subset $\widetilde{U} \subseteq S/G$ with $\emph p_S(x) \in \widetilde{U}$ and a smooth local section  $\widetilde{s}\colon \widetilde{U}\to S$ of $\emph p_S$ satisfying $\widetilde{s}\rbra \emph p_S(x) \rket = x$. In fact,
$$\widetilde{i}_{\mid \widetilde{U}} = \emph p_M \circ i \circ \widetilde{s},$$
where $\emph p_M\colon M \to M/G$ is the canonical projection. This implies that the map $\widetilde{i}$ is smooth. Due to the fact that $i\colon S \to M$ is an immersion and due to the commutativity of the following diagram
\[
\xymatrix{
S\ar[d]_{\emph p_S} \ar[rr]^{i}&& M\ar[d]^{\emph p_M}\\
S/G\ar[rr]^{\widetilde{i}}&&M/G
}
\]
we obtain that $\widetilde{i}$ is an immersion as well. Next we will show that if $i\colon S\to M$ is an embedding, then $\widetilde{i}\colon S/G \to M/G$ is also an embedding.  Recall that the topology on $S/G$ is the final topology for the projection $\emph p_S \colon S \to S/ G $. This means that a set $\widetilde{U}\subseteq S/G$ is open in $S/G$ if, and only if, $\emph p_S^{-1}(\widetilde{U})$ is open on $S$. Since $S$ has the induced topology by $M$, there exists an open subset $V$ on $M$ such that $\emph p_S^{-1}(\widetilde{U})= V \cap S$. We also observe that $\emph p_M\colon M \to M/G$ is an open map and thus $\emph p_M(V) = \widetilde{V}$ is an open set of the quotient manifold $M/G$. Now, using that  $S$ is $G$-invariant we conclude that $\emph p_M(V\cap S)= \widetilde{V}\cap S/G$.  This last statement follows from the relation $p_M(V\cap S)=p_M(G\cdot V\cap S)$. Therefore, $\widetilde{i}(\widetilde{U})= \widetilde{V} \cap S/G$. This concludes the proof that $\widetilde{i}$ is an embedding.

The statements about closedness and connectedness can readily be checked.
\end{proof}

We are now ready to prove the main result we had announced at the beginning of the section.
\begin{theorem}\label{maintheorem}
Let $\phi\colon G\times M \to M$ be a (free and proper) $G$-Hamiltonian action on a symplectic manifold $(M, \omega)$  and let $J\colon  M \to \mathfrak{g}^*$ be the corresponding  $Ad^*$-equivariant momentum map. Suppose that $S$ is a Lagrangian submanifold of $M$ which is closed, connected and embedded. Then:
\begin{enumerate}[(1)]
\item There exists a value $\mu \in \mathfrak{g}^*$ such that the submanifold $S$ is contained in the level set $J^{-1}(\mu)$.
\item The space of orbits $S_\mu = S/G_\mu$ is an isotropic submanifold of the reduced symplectic manifold $\rbra J^{-1}(\mu)/ G_\mu, \omega_\mu \rket$.
\item The submanifold $S_\mu$ is Lagrangian if, and only if, $\mathfrak{g} = \mathfrak{g_\mu}$.
\end{enumerate}
\end{theorem}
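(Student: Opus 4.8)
The plan is to prove the three assertions in turn, the main structural point being that $G$-invariance of $S$ forces $J$ to be constant along $S$. For (1): since $S$ is $G$-invariant, each infinitesimal generator $\xi_M$ ($\xi\in\mathfrak g$) is tangent to $S$ at every point of $S$; as $S$ is Lagrangian, $T_xS$ is isotropic, so $\omega(\xi_M(x),v)=0$ for all $x\in S$ and $v\in T_xS$, which by $i_{\xi_M}\omega=dJ_\xi$ says that $dJ_\xi$ annihilates $TS$. Hence every function $J_\xi=\langle J,\xi\rangle$ is locally constant on $S$, and since $S$ is connected it is constant there. Letting $\xi$ range over $\mathfrak g$ produces the value $\mu\in\mathfrak g^{*}$ with $S\subseteq J^{-1}(\mu)$.

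For (2), I would first note that $S$, being embedded, closed and connected in $M$ and contained in the embedded submanifold $J^{-1}(\mu)$, is also embedded, closed and connected in $J^{-1}(\mu)$; it is moreover $G_\mu$-invariant (as $G_\mu\subseteq G$), and $G_\mu$ acts freely and properly on $J^{-1}(\mu)$. Lemma \ref{lemma:Quo}, applied with $J^{-1}(\mu)$ and $G_\mu$ in the roles of $M$ and $G$, then gives that $S_\mu=S/G_\mu$ is an embedded, closed, connected submanifold of $J^{-1}(\mu)/G_\mu$. To see that it is isotropic, let $\mathrm{p}\colon J^{-1}(\mu)\to J^{-1}(\mu)/G_\mu$ be the orbit projection and $i\colon J^{-1}(\mu)\hookrightarrow M$ the inclusion, so that $\mathrm{p}^{*}\omega_\mu=i^{*}\omega$. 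For $x\in S$ and $\bar u,\bar v\in T_{\mathrm{p}(x)}S_\mu$, lift them through the surjective submersion $\mathrm{p}|_S\colon S\to S_\mu$ to $u,v\in T_xS$; then $\omega_\mu(\bar u,\bar v)=(\mathrm{p}^{*}\omega_\mu)(u,v)=(i^{*}\omega)(u,v)=\omega(u,v)=0$, since $T_xS$ is isotropic in $(T_xM,\omega)$. Hence $S_\mu$ is isotropic in $(J^{-1}(\mu)/G_\mu,\omega_\mu)$.

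Claim (3) then follows from a dimension count. Writing $\dim M=2n$, $\dim\mathfrak g=k$ and $\dim\mathfrak g_\mu=k_\mu$, one has $\dim S=n$, $\dim J^{-1}(\mu)=2n-k$ (the action being free, $J$ is a submersion), $\dim\big(J^{-1}(\mu)/G_\mu\big)=2n-k-k_\mu$, and $\dim S_\mu=n-k_\mu$ (since $G_\mu$ acts freely and properly on $S$). As an isotropic submanifold is Lagrangian precisely when its dimension equals half that of the ambient symplectic manifold, $S_\mu$ is Lagrangian if and only if $n-k_\mu=\tfrac12(2n-k-k_\mu)$, that is, if and only if $k_\mu=k$; and because $\mathfrak g_\mu\subseteq\mathfrak g$ this is equivalent to $\mathfrak g=\mathfrak g_\mu$.

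I do not expect a real obstacle here: the only substantive step is (1), namely the interplay between $G$-invariance of $S$, the Lagrangian condition and the identity $i_{\xi_M}\omega=dJ_\xi$ — and it is worth stressing that $G$-invariance of $S$ is genuinely needed, since without it the conclusion of (1) already fails (for instance for a cotangent fibre of $T^{*}\mathbb R$ under the cotangent lift of the translation action on the base). The only care required is in the small pieces of bookkeeping in (2): checking that $S$ really is an embedded closed submanifold of $J^{-1}(\mu)$, so that Lemma \ref{lemma:Quo} applies without change, and that $\mathrm{p}|_S$ is a surjective submersion onto $S_\mu$, so that the lifts $u,v$ exist and the computation of $\omega_\mu(\bar u,\bar v)$ is legitimate.
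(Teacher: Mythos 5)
Your proposal is correct and follows essentially the same route as the paper's own proof: part (1) via $G$-invariance of $S$ plus isotropy and $i_{\xi_M}\omega=dJ_\xi$, part (2) via Lemma \ref{lemma:Quo} applied to the $G_\mu$-action on $J^{-1}(\mu)$ together with the defining relation $\emph{p}_{J^{-1}(\mu)}^{*}\omega_\mu=i^{*}\omega$, and part (3) by the same dimension count. The only additions are harmless bookkeeping remarks and a counterexample illustrating the necessity of $G$-invariance, neither of which changes the argument.
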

\begin{proof}Recall that $S$ being Lagrangian is equivalent to the following two conditions: $\dim S=1/2\dim M$, and $S$ is isotropic, i.e.
\[
T_xS \subset (T_xS)^\perp = \cbra u\in T_xM \colon  \omega(x)( u, v) = 0, \quad \mbox{for all $v \in T_xS$} \cket,
\]
for all $x\in S$.
 \begin{enumerate}[(1)]
\item We must prove that $J_{\mid S}\colon S \to \mathfrak{g}^*$ is a constant map or equivalently, that for each $\xi\in\mathfrak{g}$, the real function ${J_\xi}_{\mid S}\colon S \to \mathbb{R}$ given by
$${J_\xi}_{\mid S}(x)= \abra J(x), \xi\aket \quad \mbox{for all $x\in S$},$$
is  constant. Since $S$ is connected it suffices to show $d(J_\xi)_{\mid S} = 0$. From the $G$-invariance of $S$, we have $\xi_M(x)\in T_x S$, and this fact, together with the isotropy condition on $S$ (namely $T_xS\subset (T_xS)^\perp$), implies
\[
\abra d(J_\xi)_{\mid S}(x), v\aket= \abra (dJ_\xi)(x), v\aket= \abra(i_{\xi_M} \omega)(x), v\aket = \omega (\xi_{M}(x), v)=0, \quad \mbox{ for $v \in T_xS$}.
\]
We conclude that there exists a $\mu \in \mathfrak{g}^* $ such that $S\subseteq J^{-1}(\mu)$.
\vspace{0.4cm}
\item When we apply Lemma~\ref{lemma:Quo} to the induced $G_\mu$-action on $J^{-1}(\mu)$, it follows that $S_\mu= S/G_\mu$ is a closed connected embedded submanifold of the reduced symplectic manifold $\rbra M_\mu = J^{-1}(\mu)/G_\mu, \omega_\mu \rket$.

Now, we will show that $S/G_\mu$ is an isotropic submanifold of $ \rbra J^{-1}(\mu)/ G_\mu, \omega_\mu \rket$. This means that
\[
T_{(\emph p_S(x))}S/G_\mu \subseteq \rbra T_{(\emph p_S(x))}S/G_\mu \rket^\perp, \quad \mbox{for all $\emph p_S(x)\in S/G_\mu$},
\]
where $\emph p_S\colon S \to S/G_\mu$ is the canonical projection and the orthogonality $\perp$  is understood with respect to the symplectic structure $\omega_\mu$. Let $u, v\in T_xS$, then $T_x\emph p_{S}(u)$ and $T_x\emph p_{S}(v)$ are elements of $T_{p_S(x)}(S/G)$. By considering the following commutative diagram
\[
\xymatrix{
S \ar[d]_{\emph p_S}\ar[rr]^{i_S}&& J^{-1}(\mu)\ar[d]^{\emph p_{J^{-1}\mu}}\\
S/G_\mu\ar[rr]^{i_{S\mu}} && J^{-1}(\mu)/G_\mu
}
\]
it follows that
\begin{align*}
\omega_\mu(\emph p_S(x))(T_x\emph p_S(u), T_x\emph p_S(v)) &= \omega_\mu(\emph p_{J^{-1}\mu}(x))(T_x\emph p_ {J^{-1}\mu}(u), T_x\emph p_{J^{-1}\mu}(v)) \\
&= ((\emph p_{J^{-1}\mu})^*\omega_\mu)(x)(u,v).
\end{align*}
Recall that the symplectic form $\omega_\mu$ on $J^{-1}(\mu)/ G_\mu$ is characterized by $\rbra \emph p_{J^{-1}\mu}\rket^*\omega_\mu = i^*\omega$, where $i\colon J^{-1}(\mu) \to M$ is the canonical inclusion. Then
\[
\rbra (\emph p_{J^{-1}\mu}\rket ^*\omega_\mu)(x)(u,v)= (i^* \omega)(x)(u,v) = \omega(x)(u,v)=0,
\]
where in the last equality we have used the assumption that $S$ is Lagrangian (in particular, that it is isotropic).
\vspace{0.4cm}
\item Since by assumption $\dim S=1/2\dim M$ and since
\begin{align*}
\mbox{dim}~\rbra J^{-1}(\mu) / G_\mu \rket &= \mbox{dim}~M - \mbox{dim}~G- \mbox{dim}~ G_\mu,\\
\mbox{dim}~\rbra S/G_\mu \rket &= \mbox{dim}~S - \mbox{dim}~ G_\mu,
\end{align*}
it follows that $S/G_\mu$ is Lagrangian if, and only if, $\mbox{dim}~G = \mbox{dim}~G_\mu$. In other words, $S/G_\mu$ is Lagrangian if, and only if, $\mathfrak{g} = \mathfrak{g}_\mu$.
\end{enumerate}
\end{proof}
\begin{example}\label{LagranIn}
Let $\phi\colon G\times M \to M$ be an action of a Lie group $G$ on a connected manifold $M$ and $H\in C^{\infty}(M)$ a $G$-invariant function. Then, the image of the differential of $H$, $dH(M)$, is a Lagrangian submanifold of the cotangent bundle $(T^*M, \omega_M)$ which is invariant with respect to the cotangent lifted action $\phi^{T^*M}$ of $\phi$. Indeed,
$$\phi^{T^*M}_g(dH(q)) =  d(H  \circ \phi_{g^{-1}})(gq)= dH(gq),$$
for each $g\in G$ and $q\in M$, where the last equality holds by the invariance of the function $H$.

Applying Theorem~\ref{maintheorem} to the (closed, connected and embedded) Lagrangian submanifold $dH(M)$, there exists a value $\mu$ of the momentum map $J_{T^*M}\colon T^*M\to\mathfrak{g}$ such that $dH(M) \subseteq J^{-1}_{T^*M}(\mu)$. In fact,
$$\abra J_{T^*M}(dH(q)), \xi\aket = \abra dH(q), \xi_M(q)\aket = \xi_M(H)(q) =0, $$
for all $\xi \in\mathfrak{g}$ and $q\in M$, where again the last equality is a consequence of the invariance of $H$. This shows that $dH(M)\subseteq J_{T^*M}^{-1}(0)$, so in this particular case $\mu=0$.

Given that the submanifold $dH(M)$ is $G$-invariant, we may consider the reduced submanifold $dH(M)/G$ of the reduced symplectic manifold $J_{T^*M}^{-1}(0)/G$ and, in view of Theorem~\ref{maintheorem}, $dH(M)/G$ is Lagrangian. Actually, as we have already seen (Example~\ref{ex:cotangent reduction}), $J^{-1}_{T^*M}(0)/G$ may be identified with $T^*(M/G)$ and, under this identification, the Lagrangian submanifold $dH(M)/G$ is just $dh(M/G)$, where $h\colon M/G\to \mathbb{R}$ is the reduced Hamiltonian induced by $H$.
\hfill$\lhd$
\end{example}
\section{Hamilton-Poincar\'e reduction} \label{reducedDynamics}\label{SubHam}
In this section we will obtain an intrinsic description of the solutions of the Hamilton-Poincar\'e equations.

Let $\phi\colon G\times M\to M$ be an action on the symplectic manifold $(M,\omega)$, and consider its tangent and cotangent lift to $TM$ and $T^*M$ respectively. Unlike the cotangent action, the tangent action is not always Hamiltonian. Only when $\phi$ is required to be Hamiltonian so will also be $\phi_g^{TM}=T\phi_g$, as we show next.

We will make use the following result from~\cite{Tlcz76a}.  Let  $\omega$ be  a closed two-form on a manifold $M$ and  consider the vector bundle morphism
\[
\emph{b}_\omega\colon TM\to T^*M
\]
induced by $\omega$, which sends $v_x\in T_xM$ to the 1-form defined by $\langle\emph{b}_\omega(v_x),w_x\rangle=\omega(v_x,w_x)$, for all $w_x\in T_xM$.  One can show that the canonical symplectic form $\omega_M$ of $T^*M$ and the complete lift $\omega^c$  of the closed two-form $\omega$ to $TM$   are related by the morphism $\emph{b}_\omega$ in the following way:
\begin{equation}\label{complete}
\emph{b}_\omega^* (\omega_M) = - \omega^c.
\end{equation}
This equation may in fact be used as an alternative definition of the complete lift of the form $\omega$. From the definition  of $\emph{b}_\omega$, it  is clear that it is a vector bundle isomorphism in case $\omega$ is non-degenerate. Combined with (\ref{complete}), this shows that $\omega^c$ is a symplectic form on $TM$ (and that $\emph{b}_\omega$ is an anti-symplectomorphism).

\begin{theorem}\label{prop: tangent reduction}
Let $(M,\omega)$ be a symplectic manifold with a Hamiltonian action $\phi \colon G\times M\to M$ and equivariant momentum $J\colon M\to \mathfrak{g}^*$. Then:
\begin{enumerate}[(1)]
\item The vector bundle isomorphism ${b}_\omega\colon TM \to T^*M$ is $G$-equivariant with respect to the actions $\phi^{TM}\colon G\times TM\to TM $ and $\phi^{T^*M}\colon G\times T^*M\to T^*M$.
\item  $\phi^{TM}$ is a $G$-Hamiltonian action on the symplectic manifold $(TM, \omega^c)$ whose  associated $Ad^*$-equivariant momentum map $J_{TM}\colon TM\to \mathfrak{g}^*$  is given by
\[
\abra J_{TM}(v_x), \xi\aket = v_x(J_\xi), \quad \mbox{for all $v_x\in TM$ and for all $\xi \in \mathfrak{g}$} .
\]
Equivalently, $J_{TM}$ satisfies $J_{TM}=-J_{T^*M}\circ {b}_\omega$, where $J_{T^*M}\colon T^*M \to \mathfrak{g}^*$ is the momentum map associated with the symplectic action $\phi^{T^*M}$.
\end{enumerate}
\end{theorem}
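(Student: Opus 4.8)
The plan is to prove (1) by a direct computation with the definitions of the lifted actions, and then to obtain (2) by transporting the Hamiltonian structure of $(T^*M,\omega_M)$ along $b_\omega$, using that $b_\omega$ is an anti-symplectomorphism in the sense of~(\ref{complete}).

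For (1), I would pair $b_\omega(\phi^{TM}_g(v_x))$ and $\phi^{T^*M}_g(b_\omega(v_x))$ --- both covectors at $gx$ --- against an arbitrary $w_{gx}\in T_{gx}M$. Unwinding the definitions of the tangent and cotangent lifts, the first pairing equals $\omega(gx)(T_x\phi_g(v_x),w_{gx})$ and the second equals $\omega(x)(v_x,T_{gx}\phi_{g^{-1}}(w_{gx}))$; these agree by $\phi_g^*\omega=\omega$ together with $T_x\phi_g\circ T_{gx}\phi_{g^{-1}}=\mathrm{id}$. Thus $b_\omega\circ\phi^{TM}_g=\phi^{T^*M}_g\circ b_\omega$ for all $g\in G$.

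For (2), equivariance of $b_\omega$ from part (1) immediately gives two things: each $\phi^{TM}_g$ preserves $\omega^c=-b_\omega^*\omega_M$ (because $\phi^{T^*M}_g$ preserves $\omega_M$), and the infinitesimal generators $\xi_{TM}$ and $\xi_{T^*M}$ are $b_\omega$-related. I would then \emph{define} $J_{TM}:=-J_{T^*M}\circ b_\omega$. Its $Ad^*$-equivariance is inherited from that of $J_{T^*M}$ via part (1). For the momentum condition, from $(J_{TM})_\xi=-b_\omega^*((J_{T^*M})_\xi)$ and $d(J_{T^*M})_\xi=i_{\xi_{T^*M}}\omega_M$ one gets $d(J_{TM})_\xi=-b_\omega^*(i_{\xi_{T^*M}}\omega_M)=-i_{\xi_{TM}}(b_\omega^*\omega_M)=i_{\xi_{TM}}\omega^c$, using the $b_\omega$-relatedness of the generators and~(\ref{complete}). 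Finally, to identify $J_{TM}$ with the map in the statement, I would use $\langle J_{T^*M}(\alpha_x),\xi\rangle=\langle\alpha_x,\xi_M(x)\rangle$ and $i_{\xi_M}\omega=dJ_\xi$ to compute $\langle J_{TM}(v_x),\xi\rangle=-\langle b_\omega(v_x),\xi_M(x)\rangle=\omega(x)(\xi_M(x),v_x)=v_x(J_\xi)$.

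I expect the only delicate point to be the sign bookkeeping: since $b_\omega$ is an anti-symplectomorphism rather than a symplectomorphism, the transported momentum map carries the extra minus sign $J_{TM}=-J_{T^*M}\circ b_\omega$, and one must check that this is consistent with the complete-lift convention~(\ref{complete}) and with the sign convention $i_{\xi_M}\omega=dJ_\xi$. Everything else is routine diagram-chasing and bilinearity of $\omega$.
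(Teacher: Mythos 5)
Your proposal is correct and follows essentially the same route as the paper: part (1) by the same pairing computation using $\phi_g^*\omega=\omega$, and part (2) by defining $J_{TM}:=-J_{T^*M}\circ b_\omega$ and transporting equivariance and the momentum condition through the anti-symplectomorphism $b_\omega$, ending with the same chain $\langle J_{TM}(v_x),\xi\rangle=-\langle b_\omega(v_x),\xi_M(x)\rangle=v_x(J_\xi)$. You merely spell out the steps (the $b_\omega$-relatedness of $\xi_{TM}$ and $\xi_{T^*M}$ and the verification $d(J_{TM})_\xi=i_{\xi_{TM}}\omega^c$) that the paper compresses into ``it follows easily.''
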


\begin{proof} \begin{enumerate}[(1)]
\item If $x\in M$ and $g\in G$, using that the action $\phi\colon G\times M\to M$ is symplectic, it is straightforward that
\[
\emph{b}_\omega (\phi_g(x)) \circ T_x \phi_g = T^*_{\phi_g(x)}\phi_{g^{-1}} \circ \emph{b}_\omega(x).
\]
\item Recall that the equivariant momentum map associated to the cotangent action is given by (see Example~\ref{ex:cotangent reduction})
\[
\abra J_{T^*M}(\alpha_x), \xi\aket = \abra\alpha_x, \xi_M(x)\aket, \quad \mbox{for all $\alpha_x\in T^*M$ and for all $\xi\in\mathfrak{g}$}.
\]
Define $J_{TM}\colon TM\to \mathfrak{g}^*$ by the equality $J_{T^*M}\circ \emph{b}_\omega = - J_{TM}$. Using that $\emph{b}_\omega$ is an equivariant anti-symplectomorphism, it follows easily that $J_{TM}$ is an equivariant momentum map which satisfies
\[
\langle J_{TM}(v_x),\xi\rangle=-\langle \emph{b}_\omega(v_x),\xi_M(x)\rangle=-\omega(x)(v_x,\xi_M(x))=v_x(J_\xi),
\]
for all $v_x\in TM$ and $\xi\in \mathfrak{g}$.
\end{enumerate}
\end{proof}
Applying the previous theorem to the case of a cotangent  bundle  $(T^*Q, \omega_Q,J_{T^*Q})$, it follows that the vector bundle isomorphism (which is an anti-symplectomorphism) $\emph{b}_{\omega_Q} \colon TT^*Q \linebreak \to T^*T^*Q$ is $G$-equivariant with respect to the $G$-Hamiltonian actions $\phi^{TT^*Q}$ and
$\phi^{T^*T^*Q}$  defined as the tangent and cotangent lift of $\phi^{T^*Q}$. Moreover $\emph{b}_{\omega_Q}$ preserves the momentum maps of these actions in the way explained above, namely
\[
J_{T^*T^*Q}\circ \emph{b}_{\omega_Q} = - J_{TT^*Q}.
\]
Here $J_{T^*T^*Q}$ and $J_{TT^*Q}$ are defined as
\begin{align*}
\abra J_{T^*T^*Q}(\beta_{\alpha_q}), \xi\aket =& \abra \beta_{\alpha_q}, \xi_{T^*Q}(\alpha_q)\aket, \\
\abra J_{TT^*Q}(v_{\alpha_q}), \xi\aket =&\, v_{\alpha_q}((J_{T^*Q})_\xi),
\end{align*}
for all $\beta_{\alpha_q}\in T^*T^*Q$, $v_{\alpha_q}\in TT^*Q$ and $\xi\in\mathfrak{g}$. Using the results on symplectic reduction from Section~\ref{MarWeSec}, the symplectic orbit spaces $J^{-1}_{TT^*Q}(0)/G$ and $J^{-1}_{T^*T^*Q}(0)/G$ are anti-symplectomorphic via the map $[(\emph{b}_{\omega_Q})_{0}]\colon J^{-1}_{TT^*Q}(0)/G \to  J^{-1}_{T^*T^*Q}(0)/G$ which is characterized by the condition
\begin{equation}\label{bemol-quotient}
[(\emph{b}_{\omega_Q})_{0}] \circ  \emph p_{J^{-1}_{TT^*Q}(0)} = \emph p_{J^{-1}_{T^*T^*Q}(0)}   \circ {\emph b_{\omega_Q}}_{\mid J^{-1}_{TT^*Q}(0)}.
\end{equation}

Let us focus on the range of the map $[(\emph{b}_{\omega_Q})_{0}]$, i.e. the symplectic space $(J^{-1}_{T^*T^*Q}(0)/G, \linebreak (\omega_{T^*Q})_0)$  obtained by a cotangent reduction at the (regular) value  $\mu = 0$. In Example~\ref{ex:cotangent reduction} we
explained how this space is symplectomorphic to the canonical symplectic space
$T^*(T^*Q/G)$, where the symplectomorphism $\Psi_0\colon(J^{-1}_{T^*T^*Q}(0)/G, (\omega_{T^*Q})_0)\to (T^*(T^*Q/G),
\linebreak \omega_{T^*Q/G})$ is defined by
\begin{equation}\label{Psi}
\abra\Psi_{0}(\emph p_{J_{T^*T^*Q}^{-1}(0)}(\alpha_{\beta_q})),T_{\beta_q} \emph p_{T^*Q}(v_{\beta_q})\aket = \abra\alpha_{\beta_q}, v_{\beta_q}\aket,
\end{equation}
for all $\beta_q\in T^*Q$, $\alpha_{\beta_q}\in J^{-1}_{T^*T^*Q}(0)$ and  $v_{\beta_q}\in TT^*Q$. On the other hand, the symplectic space on the domain of $[(\emph{b}_{\omega_Q})_{0}]$, $J^{-1}_{TT^*Q}(0)/G$, is not symplectomorphic to a tangent bundle. However, it is possible to define a vector bundle morphism $\Xi$ (over the identity of $T^*Q/G$)
\[
\Xi\colon J^{-1}_{TT^*Q}(0)/G \to T(T^*Q/G)
\]
which is characterized by the condition
\begin{equation}\label{Xi}
\Xi (\emph p_{J_{TT^*Q}^{-1}(0)}(v_{\alpha_q}))=  T \emph p_{T^*Q}(v_{\alpha_q}), 
\end{equation}
for all $v_{\alpha_q}\in J^{-1}_{TT^*Q}(0)$.

Recall from Remark~\ref{rem:reduction} that the orbit space $T^*Q/G$ can be endowed with a Poisson structure by imposing the projection $\emph p_{T^*Q}\colon T^*Q\to T^*Q/G$ to be a Poisson epimorphism (see also \cite{MaRa}). Indeed, if  $\{.,.\}_{T^*Q}$ and $\{.,.\}_{T^*Q/G}$ denote the Poisson brackets on $T^*Q$ and $T^*Q/G$ respectively, then
\[
\{ \widehat \varphi\circ \emph p_{T^*Q}, \widehat \gamma \circ \emph p_{T^*Q} \}_{T^*Q} = \{\widehat \varphi, \widehat \gamma\}_{T^*Q/G} \circ \emph p_{T^*Q}
\]
for all $\widehat \varphi, \widehat \gamma \in C^{\infty}(T^*Q/G)$. We will write $\sharp_{T^*Q/G}\colon T^*(T^*Q/G) \to T(T^*Q/G)$ for the vector bundle morphism induced by the Poisson structure on $T^*Q/G$:
\begin{equation}\label{VecBunMorPois}
\sharp_{T^*Q/G} (d\widehat\varphi) = X_{\widehat\varphi}, \quad \mbox{for all $\widehat\varphi\in C^\infty(T^*Q/G)$},
\end{equation}
where $X_{\widehat\varphi}$ is the Hamiltonian vector field on $T^*Q/G$ given by
$$X_{\widehat\varphi}(\widehat\gamma) = \{\widehat \gamma, \widehat\varphi\}_{T^*Q/G}, \quad \mbox{for all $\widehat\gamma\in C^\infty(T^*Q/G)$}.$$
Thus,  if $X_{(\widehat\varphi \circ \emph p_{T^*Q})}$ is the Hamiltonian vector field on $T^*Q$ corresponding to the function $\widehat\varphi \circ \emph p_{T^*Q}\in C^\infty(T^*Q)$, it follows that
\begin{equation}\label{transfer-ham}
T\emph p_{T^*Q}\circ X_{(\widehat\varphi \circ \emph p_{T^*Q})} = X_{\widehat\varphi}\circ \emph p_{T^*Q}.
\end{equation}
The next lemma summarizes the relation between the maps introduced before.
\begin{lemma}\label{relationHam}
The following diagram
\[
\xymatrix{
J^{-1}_{TT^*Q}(0)/G\ar[d]_{\Xi} &&J^{-1}_{T^*T^*Q}(0)/G\ar[ll]_{[(\emph{b}_{\omega_Q})_{0}]^{-1}}\\
T(T^*Q/G)\ar[dr]_{\tau_{T^*Q/G}}&& T^*(T^*Q/G)\ar[dl]^{\pi_{T^*Q/G}}\ar[ll]^{\sharp_{T^*Q/G}}\ar[u]_{({\Psi_0})^{-1}}\\
&T^*Q/G&
}
\]
is commutative.
\end{lemma}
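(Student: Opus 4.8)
The plan is to collapse the commutativity of the whole diagram to a single identity and then verify that identity by a short pointwise chase through the characterizations \eqref{bemol-quotient}, \eqref{Psi}, \eqref{Xi}, \eqref{transfer-ham} and \eqref{VecBunMorPois}. First I would note that the bottom triangle commutes for free: each of $\Xi$, $[(\emph{b}_{\omega_Q})_0]$, $\Psi_0$ and $\sharp_{T^*Q/G}$ is a vector bundle morphism over the identity of $T^*Q/G$, so $\tau_{T^*Q/G}$ and $\pi_{T^*Q/G}$ are automatically intertwined by any composite of them. Hence it remains to prove the ``pentagon'', namely
\[
\Xi\circ[(\emph{b}_{\omega_Q})_0]^{-1}\circ(\Psi_0)^{-1} = \sharp_{T^*Q/G}\colon T^*(T^*Q/G)\to T(T^*Q/G).
\]
Both sides are vector bundle morphisms over $T^*Q/G$, and every covector of $T^*(T^*Q/G)$ at a given point is the differential of a globally defined function, so it suffices to evaluate them on $\eta=d\widehat\varphi(m_0)$ for arbitrary $\widehat\varphi\in C^\infty(T^*Q/G)$ and $m_0\in T^*Q/G$.

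Fix such $\widehat\varphi$ and $m_0$, set $\varphi:=\widehat\varphi\circ\emph p_{T^*Q}\in C^\infty(T^*Q)$ (a $G$-invariant function) and choose $\alpha_q\in T^*Q$ with $\emph p_{T^*Q}(\alpha_q)=m_0$. Using \eqref{Psi} one checks directly that $\Psi_0(\emph p_{J^{-1}_{T^*T^*Q}(0)}(d\varphi(\alpha_q)))=d\widehat\varphi(m_0)=\eta$; here $d\varphi(\alpha_q)$ does lie in $J^{-1}_{T^*T^*Q}(0)$ precisely because $\varphi$ is $G$-invariant, so that $\langle d\varphi(\alpha_q),\xi_{T^*Q}(\alpha_q)\rangle=\xi_{T^*Q}(\varphi)(\alpha_q)=0$ for all $\xi\in\mathfrak g$. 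Thus $(\Psi_0)^{-1}(\eta)=\emph p_{J^{-1}_{T^*T^*Q}(0)}(d\varphi(\alpha_q))$.

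Next I would use that $\emph{b}_{\omega_Q}(X_\varphi)=d\varphi$, where $X_\varphi$ is the Hamiltonian vector field of $\varphi$ on $(T^*Q,\omega_Q)$ (this is immediate from the definition of $\emph{b}_{\omega_Q}$ together with $i_{X_\varphi}\omega_Q=d\varphi$). Since $\emph{b}_{\omega_Q}$ is a vector bundle isomorphism with $J_{T^*T^*Q}\circ\emph{b}_{\omega_Q}=-J_{TT^*Q}$ (Theorem~\ref{prop: tangent reduction}), the membership $d\varphi(\alpha_q)\in J^{-1}_{T^*T^*Q}(0)$ forces $X_\varphi(\alpha_q)\in J^{-1}_{TT^*Q}(0)$, and then \eqref{bemol-quotient} gives
\[
[(\emph{b}_{\omega_Q})_0]^{-1}\bigl(\emph p_{J^{-1}_{T^*T^*Q}(0)}(d\varphi(\alpha_q))\bigr)=\emph p_{J^{-1}_{TT^*Q}(0)}(X_\varphi(\alpha_q)).
\]
Applying $\Xi$ and \eqref{Xi} yields $T_{\alpha_q}\emph p_{T^*Q}(X_\varphi(\alpha_q))$; since $X_\varphi=X_{\widehat\varphi\circ\emph p_{T^*Q}}$, relation \eqref{transfer-ham} rewrites this as $X_{\widehat\varphi}(m_0)$, and by \eqref{VecBunMorPois} the latter is exactly $\sharp_{T^*Q/G}(d\widehat\varphi(m_0))=\sharp_{T^*Q/G}(\eta)$. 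This closes the chase and hence proves the lemma.

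All computations here are short; the only points that require care are the sign/convention bookkeeping in the identity $\emph{b}_{\omega_Q}(X_\varphi)=d\varphi$ (to be sure no spurious sign enters the final comparison with $\sharp_{T^*Q/G}$) and the two level-set membership checks $d\varphi(\alpha_q)\in J^{-1}_{T^*T^*Q}(0)$ and $X_\varphi(\alpha_q)\in J^{-1}_{TT^*Q}(0)$ — which is exactly where the hypothesis that $\varphi$ is the pullback of a function on $T^*Q/G$ is genuinely used. I do not anticipate a real obstacle beyond keeping these conventions straight.
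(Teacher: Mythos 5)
Your proposal is correct and follows essentially the same route as the paper's own proof: reduce everything to the identity $\Xi\circ[(\emph{b}_{\omega_Q})_0]^{-1}\circ\Psi_0^{-1}=\sharp_{T^*Q/G}$ evaluated on differentials $d\widehat\varphi$, pass to the $G$-invariant function $\widehat\varphi\circ p_{T^*Q}$, and chase through \eqref{Psi}, \eqref{bemol-quotient}, \eqref{Xi} and \eqref{transfer-ham}. The only (harmless) additions are your explicit remark that the bottom triangle commutes automatically because all maps involved are bundle morphisms over the identity of $T^*Q/G$, and your explicit verification of the level-set memberships, both of which the paper leaves implicit.
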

\begin{proof}
It is sufficient to prove that
$$\sharp_{T^*Q/G}((d\widehat\varphi)({\emph p_{T^*Q}}(\alpha_q)))= (\Xi \circ [(\emph{b}_{\omega_Q})_0]^{-1}\circ \Psi^{-1}_0)((d\widehat\varphi)(\emph p_{T^*Q}(\alpha_q))),$$
for all $\widehat\varphi\in C^\infty(T^*Q/G)$ and $\alpha_q\in T^*Q$.\\
\\
Consider the function $\widehat\varphi \circ \emph p_{T^*Q}\in C^\infty(T^*Q)$. Then, it is clear that $d(\widehat{\varphi} \circ p_{T^*Q})(\alpha_{q}) \in J_{T^*T^*Q}^{-1}(0)$. Moreover, from the definition of $\Psi_0$ in (\ref{Psi}) we have that for all $v_{\alpha_q}\in TT^*Q$
\begin{align*}
\langle \Psi_0\left(\emph p_{J_{T^*T^*Q}^{-1}(0)}\left(d(\widehat{\varphi} \circ \emph p_{T^*Q})(\alpha_q)\right)\right), T_{\alpha_q} \emph p_{T^*Q} (v_{\alpha_q})\rangle &=
\langle d(\widehat\varphi \circ \emph p_{T^*Q})(\alpha_q), v_{\alpha_q}\rangle \\
&=\langle d\widehat\varphi(p_{T^*Q}(\alpha_q)), T_{\alpha_q} \emph p_{T^*Q}(v_{\alpha_q})\rangle,
\end{align*}
and this means
\[
\Psi^{-1}_0((d\widehat\varphi)(\emph p_{T^*Q}(\alpha_q))) = \emph p_{J_{T^*T^*Q}^{-1}(0)}(d(\widehat\varphi \circ \emph p_{T^*Q})(\alpha_q)).
\]
Combining the previous expression with the definition of $[(\emph{b}_{\omega_Q})_0]$ in (\ref{bemol-quotient}) it follows that
\[
[(\emph{b}_{\omega_Q})_0]^{-1}(\Psi^{-1}_0((d\widehat\varphi)(\emph p_{T^*Q}(\alpha_q))) )= \emph p_{J_{TT^*Q}^{-1}(0)}(X_{(\widehat\varphi \circ \emph p_{T^*Q})}(\alpha_q))
\]
and, recalling (\ref{Xi}), we get:
$$(\Xi \circ [(\emph{b}_{\omega_Q})_0]^{-1} \circ \Psi^{-1}_0)((d\widehat\varphi)(\emph p_{T^*Q}(\alpha_q))) = (T_{\alpha_q}\emph p_{T^*Q})(X_{(\widehat\varphi \circ \emph p_{T^*Q})}(\alpha_q)).$$
Finally, taking into account (~\ref{transfer-ham}),  we conclude that
$$(\Xi \circ [(\emph{b}_{\omega_Q})_0]^{-1} \circ \Psi^{-1}_0)((d\widehat\varphi)(\emph p_{T^*Q}(\alpha_q))) = X_{\widehat\varphi}(\emph p_{T^*Q}(\alpha_q)) = (\sharp_{T^*Q/G})((d\widehat\varphi)(\emph p_{T^*Q}(\alpha_q))).$$
\end{proof}


Let $H\colon T^*Q\to \mathbb{R}$ be a $G$-invariant Hamiltonian and consider the  $G$-invariant Lagrangian submanifold $dH(T^*Q)\subseteq J_{T^*T^*Q}^{-1}(0)$ (Example~\ref{LagranIn}). The reduced submanifold $dH(T^*Q)/G$ is a Lagrangian submanifold of the reduced symplectic manifold $J^{-1}_{T^*T^*Q}(0)/G$ which can be mapped into a Lagrangian submanifold of $J^{-1}_{TT^*Q}(0)/G$ using the map $[(\emph{b}_{\omega_Q})_{0}]$. In other words, if we denote by  $S_H$ the Lagrangian submanifold $(\emph{b}_{\omega_Q})^{-1}(dH(T^*Q))$ of $(TT^*Q, \omega_Q^c)$, then
\begin{equation}\label{IqLagSubH}
 S_H/G=[(\emph{b}_{\omega_Q})_{0}]^{-1}(dH(T^*Q)/G)\subset J^{-1}_{T^*T^*Q}(0)/G
\end{equation}
is again a Lagrangian submanifold which coincides with the submanifold
\[
S_h = ([(\emph{b}_{\omega_Q})_0]^{-1}\circ \Psi^{-1}_0\circ dh)(T^*Q/G)\subset J^{-1}_{T^*T^*Q}(0)/G,
\]
where $h\colon T^*Q/G \to \mathbb{R}$ is the reduced Hamiltonian.

The results above imply the existence of a one-to-one  correspondence between curves in $T^*Q/G$ and curves in the Lagrangian submanifold $S_h$. This correspondence is defined as follows: if $\gamma(t)$ is a curve in $T^*Q/G$,  then
\[
t\to( [(\emph{b}_{\omega_Q})_0]^{-1}\circ \Psi_0^{-1}\circ dh)(\gamma(t))
\]
is the corresponding curve in $S_h$. Conversely, a curve $\bar{\gamma}(t)$ in $S_h$ projects onto a curve $(\pi_{T^*Q/G} \Psi_0 \circ (\flat_{\omega_Q})_0)(\bar{\gamma}(t)) =(\tau_{T^*Q/G}\circ\Xi)(\bar{\gamma}(t))$ on $T^*Q/G$. \\

The next theorem relates this observation with the Hamilton-Poincar\'e equations. Roughly speaking, these equations follow from the symmetry reduction of Hamilton's equations. A geometric framework for these equation was first introduced in~\cite{CeMaPeRa} and since then several different approaches have appeared in the literature. Here we shall use the following characterization from~\cite{LeMaMa}: a curve $\gamma\colon I \to T^*Q/G$ is a solution of the Hamilton-Poincar\'e equations for $H$ if, and only if, $\gamma\colon I \to T^*Q/G$  is an integral curve of the Hamiltonian vector field $X_h\in \mathcal{X}(T^*Q/G)$  with respect to the linear  Poisson structure on $T^*Q/G$, i.e.
\begin{equation}\label{solham}
\sharp_{T^*Q/G}(dh(\gamma(t))) = X_h(\gamma(t)) = \frac{d}{dt}\gamma(t).
\end{equation}
\begin{theorem}
Let $H\colon T^*Q\to \mathbb{R}$ be a $G$-invariant Hamiltonian. Then, in the one-to-one correspondence between curves in $T^*Q/G$ and curves in $S_h$, the solutions of the Hamilton-Poincar\'e equations correspond with curves in $S_h$ whose image by $\Xi$ are tangents lifts of curves in $T^*Q/G$.
\end{theorem}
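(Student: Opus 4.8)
The plan is to reduce the statement to Lemma~\ref{relationHam} and to the characterization (\ref{solham}) of Hamilton-Poincar\'e solutions as integral curves of $X_h$, mimicking the argument that identifies solutions of Hamilton's equations with tangent lifts inside $S_H\subset TT^*Q$. First I would fix the terminology: a curve $w\colon I\to T(T^*Q/G)$ is the tangent lift of a curve in $T^*Q/G$ if and only if it is the tangent lift of its own base point curve, i.e. $w(t)=\frac{d}{dt}\big((\tau_{T^*Q/G}\circ w)(t)\big)$ for all $t$. This is the exact reduced analogue of the condition used in the (unreduced) Tulczyjew triple for $S_H$.

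Next I would unwind the correspondence. Set $F=[(\emph{b}_{\omega_Q})_0]^{-1}\circ\Psi_0^{-1}\circ dh\colon T^*Q/G\to S_h$; since $dh$ is a section of $\pi_{T^*Q/G}$ and the other two maps are diffeomorphisms, $F$ is a bijection onto $S_h$, so every curve $\bar\gamma$ in $S_h$ is $F\circ\gamma$ for a unique curve $\gamma$ in $T^*Q/G$, and this is precisely the correspondence described just before the theorem. Applying $\Xi$ and using Lemma~\ref{relationHam}, which states $\Xi\circ[(\emph{b}_{\omega_Q})_0]^{-1}\circ\Psi_0^{-1}=\sharp_{T^*Q/G}$, together with (\ref{VecBunMorPois}), I obtain the key identity
\[
\Xi(\bar\gamma(t))=\sharp_{T^*Q/G}\big(dh(\gamma(t))\big)=X_h(\gamma(t)),
\]
valid for every pair $(\gamma,\bar\gamma)$ in the correspondence. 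Since $X_h$ is a genuine vector field on $T^*Q/G$, applying $\tau_{T^*Q/G}$ gives $\tau_{T^*Q/G}(\Xi(\bar\gamma(t)))=\gamma(t)$, so $\gamma$ is exactly the base curve of $\Xi(\bar\gamma)$.

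Finally I would assemble the equivalence from this single identity. By the criterion above, $\Xi(\bar\gamma)$ is a tangent lift precisely when $X_h(\gamma(t))=\Xi(\bar\gamma(t))=\frac{d}{dt}\gamma(t)$ for all $t$, i.e. precisely when $\gamma$ is an integral curve of $X_h$, which by (\ref{solham}) is exactly the condition that $\gamma$ solves the Hamilton-Poincar\'e equations for $H$. Both directions follow from this, so the one-to-one correspondence between curves in $T^*Q/G$ and curves in $S_h$ restricts to a one-to-one correspondence between solutions of the Hamilton-Poincar\'e equations and those curves in $S_h$ whose $\Xi$-image is a tangent lift. I do not anticipate a genuine obstacle: the substance has been placed in Lemma~\ref{relationHam} and in (\ref{solham}), and the only care required is the bookkeeping of base projections and the precise meaning of ``tangent lift of a curve'' in the reduced setting.
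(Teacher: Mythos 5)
Your proposal is correct and follows essentially the same route as the paper: both rest on Lemma~\ref{relationHam} combined with the characterization (\ref{solham}) of Hamilton--Poincar\'e solutions as integral curves of $X_h$. The only cosmetic difference is that you extract a single identity $\Xi(\bar\gamma(t))=X_h(\gamma(t))$ valid for every pair in the correspondence and deduce both implications from it, whereas the paper argues the two directions separately; the mathematical content is the same.
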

\begin{proof}
If we consider a solution $\gamma\colon I \to T^*Q/G$ of the Hamilton-Poincar\'e equations, using (\ref{solham}) and Lemma~\ref{relationHam} it follows that
$$(\Xi \circ  [(\emph{b}_{\omega_Q})_{0}]^{-1} \circ {\Psi_0}^{-1}) (dh(\gamma(t))) = \sharp_{T^*Q/G}(dh(\gamma(t)))= \frac{d}{dt}\gamma(t).$$
Thus, if we take the curve $\bar \gamma\colon I \to S_h$ defined as
$$\bar\gamma(t)= ([(\emph{b}_{\omega_Q})_0]^{-1} \circ \Psi^{-1}_0)(dh(\gamma(t)),$$
we deduce that $\Xi \circ \bar\gamma$ is just the tangent lift of $\gamma$.

Conversely, let $\bar\gamma\colon I \to S_h$ be a curve on $S_h$ such that
$$(\Xi \circ \bar\gamma)(t)= \frac{d}{dt}\gamma(t), $$
where $\gamma\colon I \to T^*Q/G$ is a curve on $T^*Q/G$. Then,
$$(\tau_{T^*Q/G}\circ \Xi \circ \bar\gamma)(t)= \gamma(t),$$
which implies that
$$\bar\gamma(t)=   ([(\emph{b}_{\omega_Q})_0]^{-1}\circ \Psi^{-1}_0)(dh(\gamma(t)) .$$
As a consequence,  $\gamma$ is the corresponding curve in $T^*Q/G$ associated with $\bar\gamma$ and
$$\frac{d}{dt}\gamma(t)= (\Xi \circ \bar\gamma)(t)= \sharp_{T^*Q/G}(dh(\gamma(t))).$$
We conclude that the curve $\gamma$ on $T^*Q/G$ solves the Hamilton-Poincar\'e equations for $H$.
\end{proof}

Using the previous theorem, we obtain an intrinsic description of the Hamilton-Poincar\'e equations.
\begin{corollary}\label{Cor:main}
Let $H\colon T^*Q\to \mathbb{R}$ be a $G$-invariant Hamiltonian function. A curve $\gamma\colon I \to T^*Q/G$ is a solution of the Hamilton-Poincar\'e equations for $H$ if, and only if, the image of $\Xi$ by the corresponding curve in $S_h$,
$$t\to \bar\gamma(t)=([(b_{\omega_Q})_0]^{-1}\circ \Psi_0^{-1}\circ dh)(\gamma(t)),$$
is the tangent lift of $\gamma$.
\end{corollary}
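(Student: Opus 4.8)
The plan is to read the corollary off directly from the theorem just proved, the only extra work being to spell out which curve in $S_h$ corresponds to a given $\gamma$. First I would recall the explicit bijection between curves in $T^*Q/G$ and curves in the Lagrangian submanifold $S_h$ that was fixed just before the theorem: a curve $\gamma\colon I\to T^*Q/G$ is sent to $\bar\gamma = ([(\emph{b}_{\omega_Q})_0]^{-1}\circ\Psi_0^{-1}\circ dh)\circ\gamma$, while a curve $\bar\gamma$ in $S_h$ is sent back to its projection $(\tau_{T^*Q/G}\circ\Xi)\circ\bar\gamma$. I would then check that these two assignments really are mutually inverse: since $dh$ is a section of $\pi_{T^*Q/G}$, one has $\pi_{T^*Q/G}\circ dh = \mathrm{id}_{T^*Q/G}$, and the commutativity of the diagram in Lemma~\ref{relationHam} gives $\tau_{T^*Q/G}\circ\Xi\circ[(\emph{b}_{\omega_Q})_0]^{-1}\circ\Psi_0^{-1} = \tau_{T^*Q/G}\circ\sharp_{T^*Q/G} = \pi_{T^*Q/G}$; composing with $dh\circ\gamma$ yields $\tau_{T^*Q/G}(\Xi(\bar\gamma(t))) = \gamma(t)$ for all $t$.

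With this in hand, the theorem states precisely that $\gamma$ solves the Hamilton-Poincar\'e equations for $H$ if and only if $\Xi\circ\bar\gamma$ is a tangent lift of some curve in $T^*Q/G$. By the base-point identification $\tau_{T^*Q/G}(\Xi(\bar\gamma(t))) = \gamma(t)$ established above, the only curve of which $\Xi\circ\bar\gamma$ could be the tangent lift is $\gamma$ itself; hence "$\Xi\circ\bar\gamma$ is a tangent lift" is equivalent to "$\Xi(\bar\gamma(t)) = \frac{d}{dt}\gamma(t)$ for all $t$", which is exactly the assertion of the corollary once the explicit form $\bar\gamma(t)=([(\emph{b}_{\omega_Q})_0]^{-1}\circ\Psi_0^{-1}\circ dh)(\gamma(t))$ is substituted.

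I do not expect a genuine obstacle; the hardest part, such as it is, is the bookkeeping of which curve corresponds to which and the explicit base-point identification. A slicker alternative that bypasses the theorem altogether is to argue in one line from Lemma~\ref{relationHam} together with the characterization (\ref{solham}) of Hamilton-Poincar\'e solutions: applying $\Xi\circ[(\emph{b}_{\omega_Q})_0]^{-1}\circ\Psi_0^{-1}$ to $dh(\gamma(t))$ produces $\sharp_{T^*Q/G}(dh(\gamma(t)))$, and by (\ref{solham}) this equals $\frac{d}{dt}\gamma(t)$ exactly when $\gamma$ is a solution, so $\Xi\circ\bar\gamma$ is the tangent lift of $\gamma$ if and only if $\gamma$ solves the equations. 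I would most likely present this direct route as the proof.
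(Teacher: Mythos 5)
Your proposal is correct and follows essentially the same route as the paper: the corollary is read off from the theorem via the explicit correspondence $\gamma\mapsto\bar\gamma$, and your ``slicker'' one-line argument (apply $\Xi\circ[(\emph{b}_{\omega_Q})_0]^{-1}\circ\Psi_0^{-1}$ to $dh(\gamma(t))$, use Lemma~\ref{relationHam} to identify the result with $\sharp_{T^*Q/G}(dh(\gamma(t)))$, and invoke (\ref{solham})) is precisely the computation in the paper's proof of the preceding theorem. Your extra check that the base-point identification $\tau_{T^*Q/G}(\Xi(\bar\gamma(t)))=\gamma(t)$ forces $\gamma$ to be the only candidate base curve is a correct and slightly more explicit version of what the paper does in the converse direction.
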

The following diagram summarizes the results above:
\[
\xymatrix{
&&S_h(dh(\gamma(t)))\ar@/^/[dl]&\\
&J^{-1}_{TT^*Q}(0)/G \ar[dr]\ar[dl]_{\Xi} \ar[rr]^{\Psi_0 \circ [(\emph{b}_{\omega_Q})_{0}]}&&T^*(T^*Q/G)\ar[dl]_{\pi_{T^*Q/G}}\\
T(T^*Q/G)\ar[rr]^{\tau_{T^*Q/G}}&&T^*Q/G\ar[ur]<-1ex>_{dh}&\\
&&I\ar[u]^{\gamma}\ar[ull]^{\frac{d}{dt}\gamma}&
}
\]
\section{Special Cases}\label{Scases}
It is possible to give local expressions of the results above in full generality. This would lead to the coordinate version of the so-called vertical and horizontal Hamilton-Poincar\'e equations which can be found in e.g.~\cite{CeMaPeRa,LeMaMa,MaYo,Mest}. However in view of the many technicalities involved with these local computations (such as invoking a principal connection and its curvature, choosing adapted coordinates, etc.) we will only treat here two special cases.
\subsection{The case where the configuration space is a Lie group}\label{Lie-Poisson} We will use the action by left translation on $G$. This will lead to an interpretation of the Lie-Poisson equations as distinguished curves in a Lagrangian submanifold. For the sake of clarity, we divide the example in 4 steps.

{\sc  1) The vector bundle isomorphism $\emph{b}_{\omega_G}\colon TT^*G \to T^*T^*G$.} It is well known that the cotangent bundle $T^*G$  of the Lie group $G$ may be identified with the trivial principal bundle with total space $G\times \mathfrak{ g}^* $ and base $\mathfrak{g}^*$.  Such identification is given by
\[
\alpha_g\in T^*_gG \rightarrow (g,( T^*_eL_g)(\alpha_g))\in G\times \mathfrak{g}^*.
\]
In the same way, we will identify the tangent bundle $TG$ to $G$ with the trivial principal bundle with total space $G\times \mathfrak{g}$ and base space $\mathfrak{g}$. Combining these trivializations we further identify
$$ TT^*G\cong (G\times \mathfrak{g}^*)\times (\mathfrak{g}\times\mathfrak{g}^*), \quad T^*T^*G\cong (G\times \mathfrak{g}^*)\times (\mathfrak{g}^*\times\mathfrak{g}),$$
whose elements will be denoted as follows:
\[
 ((g, \pi), (\omega, \dot\pi))\in  (G\times \mathfrak{g}^*)\times (\mathfrak{g}\times\mathfrak{g}^*), \quad  ((g, \pi), (\tilde\pi, \omega))\in (G\times \mathfrak{g}^*)\times (\mathfrak{g}^*\times\mathfrak{g}).
\]
Under the identifications above it is obvious that the left translation on $G$ is mapped into the left translation onto the first factor, and that, therefore
\[
T^*G/G\cong \mathfrak{g}^*, \quad TT^*G/G\cong  \mathfrak{g}^*\times (\mathfrak{g}\times\mathfrak{g}^*), \quad T^*T^*G/G\cong \mathfrak{g}^*\times (\mathfrak{g}^*\times\mathfrak{g}).
\]
Using the definitions of the Liouville one-form and the canonical symplectic structure on $T^*G$, it follows that
\begin{align*}
\lambda_G(g, \pi) ((g, \pi), (\omega_1, \dot\pi_1))&= \abra\pi, \omega_1\aket,\\
\omega_G(g, \pi)\left(((g, \pi), (\omega_1, \dot\pi_1)), ((g, \pi), (\omega_2, \dot\pi_2))\right)&= \abra\dot\pi_2, \omega_1\aket-\abra\dot\pi_1, \omega_2\aket + \abra\pi, [\omega_1, \omega_2]_\mathfrak{g}\aket,
\end{align*}
for all $((g, \pi), (\omega_1, \dot\pi_1)), ((g, \pi), (\omega_2, \dot\pi_2))\in (G\times \mathfrak{g}^*)\times (\mathfrak{g}\times\mathfrak{g}^*) \cong  T(T^*G)$.

Finally, from the expression of the canonical symplectic form $\omega_G$ it is straightforward that  the vector bundle isomorphism
$$\emph{b}_{\omega_G}\colon TT^*G\cong (G\times \mathfrak{g}^*)\times (\mathfrak{g}\times\mathfrak{g}^*) \to T^*T^*G\cong (G\times \mathfrak{g}^*)\times (\mathfrak{g}^*\times\mathfrak{g})$$
is given by
\begin{equation}\label{bOmeLefTra}
\emph{b}_{\omega_G}((g, \pi), (\omega, \dot\pi)) = ((g, \pi), (-\dot \pi + ad^*_\omega\pi, \omega)),
\end{equation}
where $ad^*\colon \mathfrak{g}\times \mathfrak{g}^* \to \mathfrak{g}^*$ is the dual of the infinitesimal adjoint representation given by
\[(ad^*_\omega \pi)(\tilde \omega) = \abra \pi, [\omega, \tilde\omega]_\mathfrak{g}\aket, \quad \mbox{for}\quad \omega, \tilde \omega\in \mathfrak{g} \quad \mbox{and}\quad \pi\in \mathfrak{g}^*.
\]
{\sc 2) The reduced spaces  $J^{-1}_{TT^*G}(0)/G$ and $J^{-1}_{T^*T^*G}(0)/G$.} Let $J_{T^*G}\colon T^*G\cong (G\times \mathfrak{g}^*)\to \mathfrak{g}^*$ be the momentum map on $T^*G$, defined as
$$\abra J_{T^*G}(g, \pi), \xi\aket = \abra T^*_g L_{g^{-1}}(\pi), \xi_G(g)\aket \quad \mbox{for all $(g, \pi)\in (G\times \mathfrak{g}^*)$ and $\xi \in \mathfrak{g}$}.$$
Since the action on $G$ is the left translation, its infinitesimal generators are the right invariant vector fields. Therefore
\begin{equation}\label{MomMap}
\abra J_{T^*G}(g, \pi), \xi\aket = \abra T^*_gL_{g^{-1}}(\pi),  T_eR_g(\xi)\aket= \abra\pi, Ad_{g^{-1}}\xi\aket =\abra Ad^*_{g^{-1}}\pi, \xi\aket,
\end{equation}
or, in other words, $J_{T^*G}(g, \pi)=Ad^*_{g^{-1}}\pi$. With a similar computation we get the following expression for $J_{T^*T^*G}\colon T^*T^*G\cong  (G\times \mathfrak{g}^*)\times (\mathfrak{g}^*\times\mathfrak{g}) \to\mathfrak{g}^*$:
\[
 J_{T^*T^*G}((g, \pi), (\pi', \omega))=Ad^*_{g^{-1}}\pi'.
\]
In view of the expression for $\emph b_{\omega_G}$ and Theorem~\ref{prop: tangent reduction}, we immediately obtain the expression for the trivialized momentum $J_{TT^*G}$:
\[
J_{TT^*G}((g, \pi), (\omega, \dot\pi)) = Ad^*_{g^{-1}}\rbra  \dot\pi - ad^*_\omega \pi\rket.
\]
In particular, on the zero level sets of the momenta, we have:
\begin{align*}
J_{T^*T^*G}^{-1}(0) &= \{ ((g, \pi), ( 0, \omega)) \in (G\times \mathfrak{g}^*)\times (\mathfrak{g}^*\times\mathfrak{g})\} \cong (G\times \mathfrak{g}^*)\times \mathfrak{g},\\
J^{-1}_{TT^*G}(0) &= \{ ((g, \pi), (\omega,  ad^*_\omega\pi)) \in  ( G\times \mathfrak{g}^*) \times( \mathfrak{g}\times \mathfrak{g}^*)\}\cong G\times \mathfrak{g}^*\times \mathfrak{g},
\end{align*}
and therefore the reduced spaces
\begin{align*}
J_{T^*T^*G}^{-1}(0)/G &= \{(\pi, (\pi', \omega))\in \mathfrak{g}^* \times (\mathfrak{g}^*\times \mathfrak{g})\colon \pi'=0\}\cong \mathfrak{g}^*\times\mathfrak{g},\\
J^{-1}_{TT^*G}(0)/G &= \{ ( \pi, (\omega, \dot \pi ))  \in \mathfrak{g}^*\times (\mathfrak{g}\times \mathfrak{g}^*)  \colon  \dot\pi =  ad^*_\omega \pi\}\cong \mathfrak{g}^*\times\mathfrak{g},
\end{align*}
can both be identified with $\mathfrak{g}^*\times\mathfrak{g}$.

{\sc 3) The maps $[(\emph{b}_{\omega_G})_0]$, $\Psi_0$ and $\Xi$.} In view of the above identifications and (\ref{bOmeLefTra}), the map $[(\emph{b}_{\omega_G})_0]$ is simply given by the identity
\begin{align*}
[(\emph{b}_{\omega_G})_0]\colon \mathfrak{g}^*\times \mathfrak{g} \to&\; \mathfrak{g}^*\times \mathfrak{g}\\
(\pi, \omega) \mapsto&\; [(\emph{b}_{\omega_G})_0](\pi, \omega)=  (\pi, \omega).
\end{align*}
On the other hand, we also have the identifications $T^*(T^*G/G)\cong \mathfrak{g}^*\times \mathfrak{g}$ and $T(T^*G/G)\cong \mathfrak{g}^*\times \mathfrak{g}^*$, so we may as well work with trivialized expressions for the maps $\Psi_0$ and $\Xi$. One can check that these are given by:
\begin{align*}
\Psi_0\colon \mathfrak{g}^*\times \mathfrak{g} \to&\; \mathfrak{g}^*\times \mathfrak{g}\\
(\pi, \omega) \mapsto&\; \Psi_0(\pi, \omega)=  (\pi, \omega),\\
\Xi\colon \mathfrak{g}^*\times \mathfrak{g} \to&\; \mathfrak{g}^*\times \mathfrak{g}^*\\
(\pi, \omega) \mapsto&\; \Xi(\pi, \omega)= ( \pi, ad^{*}_\omega \pi).
\end{align*}
{\sc 4) The Lie-Poisson dynamics.} Let $H\colon T^*G\cong G\times \mathfrak{g}^*\to \mathbb{R}$ be a $G$-invariant Hamiltonian and denote by $h\colon \mathfrak{g}^*\to \mathbb{R}$ the reduced Hamiltonian. Define the Lagrangian submanifold $S_h$ by (see Example~\ref{LagranIn}):
$$S_h = \{(\pi, dh(\pi)) \in \mathfrak{g}^*\times \mathfrak{g}\}\cong \mathfrak{g}^*.$$

Consider a curve $\gamma(t) = (\pi(t), \omega(t))\in J^{-1}_{TT^*G}(0)/G \cong \mathfrak{g}^*\times \mathfrak{g}$ with values in  $S_h$ and which is such that its image by $\Xi$ is the tangent lift of a curve $t\mapsto\bar\pi(t)\in T^*G/G\cong \mathfrak{g}^*$. Then, it is clear that
$$\pi(t) = \bar\pi(t), \quad \omega(t) = dh(\pi(t)), \quad ad^*_{\omega(t)} \pi(t)= \frac{d}{dt}\bar \pi(t).$$
Thus, it follows that
$$ad^*_{dh(\pi(t))} \pi(t)= \frac{d}{dt}\pi(t).$$
Therefore, the curve $t \mapsto \pi(t)$ in $\mathfrak{g}^*$ solves the well known Lie-Poisson equations.

Conversely, assume that a curve in $\mathfrak{g}^*$, $t \mapsto \pi(t)$, is a solution of the Lie-Poisson equations for $H$ and consider the following curve in $S_h$:
$$t \mapsto \gamma(t) = [(\emph{b}_{\omega_G})_0]^{-1}\circ \Psi_0^{-1})(dh(\pi(t)) = (\pi(t), dh(\pi(t))\in \mathfrak{g}^*\times \mathfrak{g}\cong J^{-1}_{TT^*G}(0)/G.$$
Its image by the map $\Xi$ is the curve
$$t \mapsto \rbra \pi(t),  ad^*_{dh(\pi(t))}\pi(t)\rket \in \mathfrak{g}^*\times  \mathfrak{g}^* \cong  T(T^*G/G).$$
Using that $t \to \pi(t)$ is a solution of the Lie-Poisson equations, it follows that
$$\displaystyle \frac{d\pi}{dt}_{|t} = ad^*_{dh(\pi(t))}\pi(t),$$
i.e., the curve $\Xi\circ \gamma$ is the tangent lift of the curve $t \to \pi(t)\in \mathfrak{g}^*.$

\subsection{The case where the configuration space is a product} The second example of interest is the case where the configuration space $Q$ can be written as $G\times S$
and the action of $G$ on $G\times S$ is the left translation on the first factor. If we denote by $\pi_1\colon Q \to G$ and $\pi_2\colon Q\to S$ the projections, the canonical symplectic form $\omega_Q$ in $T^*Q$ can be decomposed as $\omega_Q=\pi_1^*\omega_G+\pi_2^*\omega_S$ (where $\omega_S$ and $\omega_G$ can be interpreted as the canonical forms on $T^*S$ and $T^*G$, respectively) and that, as a result, we have
\[
\emph{b}_{\omega_Q}=\left(\emph{b}_{\omega_G},\emph{b}_{\omega_S}\right)\colon TT^*G\times TT^*S \simeq T(T^*G \times T^*S) \to T^*T^*G \times T^*T^*S \simeq T^*(T^*G\times T^*S),
\]
where $\emph{b}_{\omega_S}$ reads like $\emph{b}_{\omega_Q}$ in Section~\ref{MarWeSec}, with $Q$ replaced by $S$.

The momentum maps can be computed as before, in the case of a Lie group. In particular,
\[
\abra J_{T^*Q}((g, \pi),\alpha_{x}), \omega\aket = \abra J_{T^*G}(g, \pi), \omega\aket = \abra\pi, Ad_{g^{-1}}\omega\aket,
\]
for all $((g, \pi),\alpha_{x})\in (G \times \mathfrak{g}^*) \times T^*S$, with $x\in S$. As a consequence we obtain directly the expressions for $J_{TT^*Q}$ and $J_{T^*T^*Q}$ from those in the previous subsection. For example, we find:
\begin{align*}
J_{TT^*Q}(((g, \pi), (\omega, \dot \pi)), X_{p_{x}}) =  J_{TT^*G}((g, \pi), (\omega, \dot \pi)) = Ad^*_{g^{-1}}(\dot \pi - ad^*_\omega \pi),
\end{align*}
for all $(((g, \pi), (\omega, \dot \pi)), X_{p_{x}}) \in  ((G\times \mathfrak{g}^*) \times (\mathfrak{g}\times \mathfrak{g}^*)) \times TT^*S$, with $p_x \in T^*S$, and similarly for $J_{T^*T^*Q}$. Therefore, on the zero level sets we have the identifications:
\begin{align*}
J^{-1}_{TT^*Q}(0)/G &\cong J^{-1}_{TT^*G}(0)/G \times TT^*S\cong (\mathfrak{g}^* \times \mathfrak{g}) \times TT^*S,\\
J^{-1}_{T^*T^*Q}(0)/G &\cong J^{-1}_{T^*T^*G}(0)/G \times T^*T^*S\cong (\mathfrak{g}^* \times \mathfrak{g}) \times T^*T^*S.
\end{align*}
By taking the previous expressions into account, we deduce that the reduced map $[(\emph{b}_{\omega_Q})_0]$ is of the form
\[
[(\emph{b}_{\omega_Q})_0]=\left([\emph{b}_{\omega_G}]_0,\emph{b}_{\omega_S}\right)\colon J^{-1}_{TT^*G}(0)/G\times TT^*S\to J^{-1}_{T^*T^*G}(0)/G\times T^*T^*S.
\]
The map $\Psi_0$ is the identity and the map $\Xi: J_{TT^*Q}^{-1}(0)/G \simeq ({\mathfrak g}^* \times {\mathfrak g}) \times TT^*S \to T(T^*Q/G) \simeq ({\mathfrak g}^* \times {\mathfrak g}^*) \times TT^*S$ is given by
\[
\Xi((\pi, \omega), X_{p_x}) = ((\pi, ad_{\omega}^*\pi), X_{p_x}).
\]
Now, suppose that $H: T^*Q \simeq G \times {\mathfrak g}^* \times T^*S \to \mathbb{R}$ is a $G$-invariant Hamiltonian function and $h: T^*Q/G \simeq  {\mathfrak g}^* \times T^*S \to \mathbb{R}$ is the reduced Hamiltonian function. Then, the Lagrangian submanifold $S_h$ in $J_{TT^*Q}^{-1}(0)/G \simeq ({\mathfrak g}^* \times {\mathfrak g}) \times TT^*S$ is given by
\[
S_h = \{ (\pi, dh_{\alpha_x}(\pi), X_{h_{\pi}}(\alpha_x)) \mid \pi \in {\mathfrak g}^*, \alpha_x \in T^*S \}
\]
where $h_{\alpha_{x}}: {\mathfrak g}^* \to \mathbb{R}$ (respectively, $h_{\pi}: T^*S \to \mathbb{R}$) is the real function on ${\mathfrak g}^*$ (respectively, $T^*S$) defined by
\[
h_{\alpha_x}(\pi') = h(\pi', \alpha_x), \; \; \mbox{ for } \pi' \in {\mathfrak g}^*
\]
(respectively, $h_{\pi}(\alpha'_{x'}) = h(\pi, \alpha'_{x'})$, for $\alpha'_{x'} \in T^*S$), and $X_{h_{\pi}}$ is the Hamiltonian vector field in $T^*S$ of $h_{\pi}$.

Thus, a curve $t \mapsto (\pi(t), x(t), p_x(t))$ on ${\mathfrak g}^* \times T^*S \simeq T^*Q/G$ satisfies the conditions of Corollary \ref{Cor:main} if and only if
\[
\dot{\pi} = ad_{\frac{\partial h}{\partial \pi}}^*\pi, \; \; \; \dot{x} = \displaystyle \frac{\partial h}{\partial p_x}, \; \; \; \dot{p}_{x} = -\displaystyle \frac{\partial h}{\partial x},
\]
which are the Hamilton-Poincar\'e equations for $H$ in this case.

\section{Lagrange-Poincar\'e reduction}\label{LP-reduction}
To get an intrinsic description of the  reduced Lagrangian equations of motion we will proceed in a similar way as we have done before for the Hamiltonian case.

The first thing to prove is that Tulczyjew's diffeomorphism $A_Q\colon TT^*Q\to T^*TQ$ is $G$-equivariant and preserves the momentum maps associated to the actions on $TT^*Q$ and $T^*TQ$. The map $A_Q$ may be defined in several ways. One possibility is to define $A_Q$ as the composition of two anti-symplectomorphisms, as we will explain in the next paragraphs. For more details, see~\cite{GrGrUr}.

A first element we need is the vector bundle projection $\mathsf{v}^*\colon T^*TQ\to T^*Q$, characterized as
$$\big{\langle}\mathsf{v}^*(\alpha_{v_q}), w_q\big{\rangle}=\big{\langle}\alpha_{v_q}, (w_q)^\mathsf{v}_{v_q}\big{\rangle},$$
for all $\alpha_{v_q}\in T^*TQ$ and $w_q\in TQ$, where  ${(\cdot)}^\mathsf{v}_{v_q}\colon T_qQ\to T_{v_q}TQ$ is the standard vertical lift:
\begin{equation}\label{verticalLift}
(w_q)^\mathsf{v}_{v_q}\left(f\right)= \frac{d}{ds}_{\mid s=0}\big(f(v_q + s w_q)\big),
\end{equation}
for each function $f$ on $TQ$.

The second element is a vector bundle isomorphism $R\colon T^*TQ\to T^*T^*Q$ over the identity of $T^*Q$ between the vector bundles $\mathsf{v}^* \colon T^*TQ\to T^*Q$ and $\pi_{T^*Q}\colon T^*T^*Q\to T^*Q$. It is completely determined by the condition:
\begin{equation}\label{diiffeoR}
\big{\langle} R\big{(} \alpha_{v_q}\big{)} , W_{\mathsf{v}^*(\alpha_{v_q})}\big{\rangle} = - \big{\langle}\alpha_{v_q}, \bar W_{v_q}\big{\rangle}+ \big{\langle} W_{\mathsf{v}^*(\alpha_{v_q})}, \bar W_{v_q} \big{\rangle}^{T},
\end{equation}
for all $\alpha_{v_q}\in T^*TQ$, $\bar W_{v_q}\in TTQ$ and $W_{\mathsf{v}^*(\alpha_{v_q})}\in TT^*Q$ satisfiying
\begin{equation}\label{RelationWbarW}
T\tau_{Q}\big{(}  \bar W_{v_q}\big{)}  = T\pi_{Q}\big{(}  W_{\mathsf{v}^*(\alpha_{v_q})}\big{)} .
\end{equation}
Here, $\langle \cdot,\cdot\rangle^{T}\colon TT^*Q \times_{TQ} TTQ\to \mathbb{R}$ is the pairing defined by the tangent map of the usual pairing
$\langle \cdot, \cdot \rangle\colon T^*Q\times _Q TQ\to \mathbb{R}$.

The Tulczyjew diffeomorphism $A_Q$ is then defined as the composition $ R^{-1} \circ  \emph{b}_{\omega_Q}$.

\begin{lemma}\label{lemma:R} Consider the  anti-symplectomorphism $R\colon T^*TQ\to T^*T^*Q$. Then:
\begin{enumerate}[(1)]
\item $R$ is $G$-equivariant.
\item $R$ satisfies $J_{T^*T^*Q} \circ R = - J_{T^*TQ}$
\end{enumerate}
\end{lemma}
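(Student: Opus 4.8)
The statement asserts two facts about the map $R\colon T^*TQ\to T^*T^*Q$, namely $G$-equivariance and the relation $J_{T^*T^*Q}\circ R=-J_{T^*TQ}$. My strategy is to work directly from the defining condition \eqref{diiffeoR}, testing both sides against the pairings that characterize $R$, and to use the naturality of all the constructions involved (the vertical lift, the canonical pairing $T^*Q\times_Q TQ\to\mathbb R$, and its tangent pairing $\langle\cdot,\cdot\rangle^T$) under the lifted actions. Since $R$ is characterized uniquely by \eqref{diiffeoR} together with the covering condition \eqref{RelationWbarW}, to prove equivariance it suffices to show that the map $g\cdot\big(R(g^{-1}\cdot\alpha_{v_q})\big)$ satisfies the same defining identity as $R(\alpha_{v_q})$; uniqueness then forces them to agree.

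First I would record how the various lifted actions interact. For $\phi\colon G\times Q\to Q$, the tangent action on $TQ$ is $\phi^{TQ}_g=T\phi_g$, and its further tangent and cotangent lifts act on $TTQ$, $TT^*Q$, $T^*TQ$, $T^*T^*Q$. The key naturality facts are: (i) the vertical lift is equivariant, $(T\phi_g(w_q))^{\mathsf v}_{T\phi_g(v_q)}=T(T\phi_g)\big((w_q)^{\mathsf v}_{v_q}\big)$, which follows by differentiating $v_q+sw_q$ and using linearity of $T_q\phi_g$ on fibres; (ii) the projection $\mathsf v^*\colon T^*TQ\to T^*Q$ intertwines the cotangent-lifted actions on $T^*TQ$ and $T^*T^*Q$ — equivalently $T^*Q\xrightarrow{} T^*Q$ — which is immediate from (i) and the characterization of $\mathsf v^*$; (iii) the canonical pairing $\langle\cdot,\cdot\rangle\colon T^*Q\times_Q TQ\to\mathbb R$ is $G$-invariant (it is the defining property of the cotangent lift), hence so is its tangent pairing $\langle\cdot,\cdot\rangle^T$, i.e. $\langle W,\bar W\rangle^T=\langle g\cdot W, g\cdot\bar W\rangle^T$; and (iv) the covering relation \eqref{RelationWbarW} is preserved by the group action because $T\tau_Q$ and $T\pi_Q$ are equivariant. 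With these in hand, substituting $g\cdot\alpha_{v_q}$, $g\cdot\bar W_{v_q}$, $g\cdot W_{\mathsf v^*(\alpha_{v_q})}$ into the right-hand side of \eqref{diiffeoR} and pulling the $g$'s out via (i)–(iv) shows that $R(g\cdot\alpha_{v_q})$ and $g\cdot R(\alpha_{v_q})$ pair identically against every admissible $W_{\mathsf v^*(\alpha_{v_q})}$; by uniqueness of $R$, part (1) follows.

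For part (2), I would use the infinitesimal generators. For $\xi\in\mathfrak g$ the generator $\xi_{TQ}$ on $TQ$ is the tangent lift of $\xi_Q$, and likewise $\xi_{T^*Q}$, $\xi_{TTQ}$, $\xi_{TT^*Q}$ are the appropriate lifts. Recall $\langle J_{T^*TQ}(\alpha_{v_q}),\xi\rangle=\langle\alpha_{v_q},\xi_{TQ}(v_q)\rangle$ and $\langle J_{T^*T^*Q}(\beta_{\alpha_q}),\xi\rangle=\langle\beta_{\alpha_q},\xi_{T^*Q}(\alpha_q)\rangle$. The plan is to apply the defining identity \eqref{diiffeoR} with the distinguished choices $\bar W_{v_q}=\xi_{TTQ}(v_q)$ projecting to $\xi_{TQ}$ under $T\tau_Q$ — no, more carefully: I would pick $W_{\mathsf v^*(\alpha_{v_q})}=\xi_{TT^*Q}(\mathsf v^*(\alpha_{v_q}))$, whose image under $T\pi_Q$ is $\xi_{TQ}(q)$ viewed appropriately, and $\bar W_{v_q}$ a lift of the same base vector — the natural candidate being a vector covering $\xi_Q$ through $T\tau_Q$. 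Then the pairing of the left-hand side becomes $\langle R(\alpha_{v_q}),\xi_{TT^*Q}\rangle=\langle J_{T^*T^*Q}(R(\alpha_{v_q})),\xi\rangle$. On the right-hand side, the first term $-\langle\alpha_{v_q},\bar W_{v_q}\rangle$ must be arranged to reproduce $-\langle J_{T^*TQ}(\alpha_{v_q}),\xi\rangle=-\langle\alpha_{v_q},\xi_{TQ}(v_q)\rangle$, which dictates choosing $\bar W_{v_q}=\xi_{TTQ}(v_q)$; and then the ``extra'' term $\langle W,\bar W\rangle^T$ is the tangent pairing of $\xi_{TT^*Q}$ with $\xi_{TTQ}$ — I expect this term to vanish, because it is (up to sign) the derivative along $\xi$ of the $G$-invariant pairing function $\langle\cdot,\cdot\rangle$ evaluated along the flow, i.e. $\frac{d}{ds}\big|_0\langle\exp(s\xi)\cdot\alpha,\exp(s\xi)\cdot v\rangle=\frac{d}{ds}\big|_0\langle\alpha,v\rangle=0$. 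Granting that vanishing, \eqref{diiffeoR} collapses to $\langle J_{T^*T^*Q}(R(\alpha_{v_q})),\xi\rangle=-\langle J_{T^*TQ}(\alpha_{v_q}),\xi\rangle$ for all $\xi$, which is exactly (2).

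The main obstacle I anticipate is the bookkeeping in part (2): one must verify that the pair $\bar W_{v_q}=\xi_{TTQ}(v_q)$ and $W=\xi_{TT^*Q}(\mathsf v^*(\alpha_{v_q}))$ genuinely satisfies the compatibility condition \eqref{RelationWbarW}, i.e. $T\tau_Q(\xi_{TTQ})=T\pi_Q(\xi_{TT^*Q})$ — both should equal $\xi_{TQ}$ under the identifications, which is plausible but deserves a clean check — and that the tangent-pairing term $\langle\xi_{TT^*Q},\xi_{TTQ}\rangle^T$ really is zero rather than contributing a spurious term; this is where the invariance of $\langle\cdot,\cdot\rangle$ must be invoked with some care, since $\langle\cdot,\cdot\rangle^T$ pairs a vector on $T^*Q$ with a vector on $TQ$ over a common base point in $TQ$, and one has to match the flows of the two generators correctly. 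Once the correct choice of test vectors is pinned down, both parts are short; part (1) can alternatively be deduced from (2) by a density/connectedness argument, but I prefer the direct uniqueness argument sketched above as it is cleaner.
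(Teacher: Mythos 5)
Your proposal is correct and follows essentially the same route as the paper: part (1) by testing the defining identity \eqref{diiffeoR} against the lifted actions using equivariance of the vertical lift, $\mathsf{v}^*$, $T\tau_Q$, $T\pi_Q$ and invariance of the pairings, and part (2) by choosing $W=\xi_{T^*Q}(\mathsf{v}^*(\alpha_{v_q}))$ and $\bar W=\xi_{TQ}(v_q)$ and showing the tangent-pairing term vanishes as the derivative of the invariant pairing along the flow. The only caveat is notational: the test vectors are the generators $\xi_{TQ}(v_q)\in TTQ$ and $\xi_{T^*Q}(\mathsf{v}^*(\alpha_{v_q}))\in TT^*Q$ of the lifted actions on $TQ$ and $T^*Q$ (not "$\xi_{TTQ}$", "$\xi_{TT^*Q}$" evaluated at those points), and the compatibility \eqref{RelationWbarW} then holds because both project to $\xi_Q(q)$.
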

\begin{proof}\begin{enumerate}[(1)]
\item First, we check that the map $\mathsf{v}^*\colon T^*TQ\to T^*Q$ is $G$-equivariant. It can readily be checked that the vertical lift is $G$-equivariant, or in short $g(w_q)^\mathsf{v}_{v_q} = (gw_q)^\mathsf{v}_{gv_q}$. Thus, for all $\alpha_{v_q}\in T^*TQ$, $w_q\in TQ$ and $g\in G$,
\begin{align*}
\big{\langle}\mathsf{v}^* (g\alpha_{v_q}), (gw_q) \big{\rangle} &= \Big{\langle} g\alpha_{v_q} , (gw_q)^\mathsf{v}_{gv_q}\Big{\rangle} =
\Big{\langle} g\alpha_{v_q} , g(w_q)^\mathsf{v}_{v_q}\Big{\rangle} \\
 &= \Big{\langle} \alpha_{v_q} , (w_q)^\mathsf{v}_{v_q}\Big{\rangle} = \big{\langle}\mathsf{v}^* (\alpha_{v_q}), (w_q) \big{\rangle},
\end{align*}
where we have used invariance of the natural pairing.

Secondly we check equivariance of $R$. Using the equivariance of the maps $T\tau_{Q}$ and $T\pi_{Q}$ and the invariance of the pairings, we find
\begin{align*}
\Big{\langle} R\big{(} g\alpha_{v_q} \big{)}, g \big{(} W_{\mathsf{v}^*(\alpha_{v_q}) }\big{)}\Big{\rangle} &= -\Big{\langle}g\alpha_{v_q}, g\bar W_{v_q}\Big{\rangle}+ \Big{\langle} gW_{\mathsf{v}^* (\alpha_{v_q})}, g\bar W_{v_q}\Big{\rangle}^T\\
&=-\Big{\langle}\alpha_{v_q}, \bar W_{v_q}\Big{\rangle}+ \Big{\langle} W_{\mathsf{v}^* (\alpha_{v_q})}, \bar W_{v_q}\Big{\rangle}^T,
\end{align*}
from where $R\left( g\alpha_{v_q} \right)=gR\left(\alpha_{v_q}\right)$ follows.
\item  From the definition of the momentum map $J_{T^*T^*Q}$ it follows that for all $\alpha_{v_q}\in T^*TQ$ and $\xi \in\mathfrak{g}$,
$$\abra J_{T^*T^*Q}(R(\alpha_{v_q})), \xi \aket = \abra R(\alpha_{v_q}), \xi_{T^*Q}( \mathsf{v}^*(\alpha_{v_q}))\aket.$$
Recalling the definition~\ref{diiffeoR} of $R$ for $W=\xi_{T^*Q}(\mathsf{v}^*(\alpha_{v_q}))$ and $\bar W= \xi_{TQ}(v_q)$ (note that this choice satisfies (\ref{RelationWbarW})), it follows that:
$$\abra R(\alpha_{v_q}), \xi_{T^*Q}(\mathsf{v}^*(\alpha_{v_q}))\aket = -\abra\alpha_{v_q}, \xi_{TQ}(v_q)\aket + \abra\xi_{T^*Q}(\mathsf{v}^*(\alpha_{v_q})), \xi_{TQ}(v_q)\aket^{T}.$$
We now show that the last term vanishes. If we write $\varphi_t$ for the flow of $\xi_Q$ around $q\in Q$, i.e. $\varphi_t=\exp(t\xi)q$, then the flows of $\xi_{TQ}$ and $\xi_{T^*Q}$ are $T\varphi_t$ and $T^*\varphi_t$ respectively. With this, using the invariance of the bracket, we conclude:
\begin{align*}
\abra\xi_{T^*Q}(\mathsf{v}^*(\alpha_{v_q})), \xi_{TQ}(v_q)\aket^T &=   \frac{d}{dt}_{\mid t=0}\abra (T^*\varphi_t)(\mathsf{v}^*(\alpha_{v_q})), T\varphi_t(v_q)\aket\\
&=  \frac{d}{dt}_{\mid t=0}\abra\mathsf{v}^*(\alpha_{v_q}), v_q\aket = 0.
\end{align*}
Therefore,
$$\abra J_{T^*T^*Q}(R(\alpha_{v_q})), \xi \aket = -\abra\alpha_{v_q}, \xi_{TQ}(v_q)\aket = -\abra J_{T^*TQ}(\alpha_{v_q}), \xi\aket.$$
\end{enumerate}
\end{proof}
The previous lemma implies the following important result:
\begin{theorem}
Tulczyjew's diffeomorphism  is $G$-equivariant with respect to the $G$-Hamil\-to\-nian actions on $TT^*Q$ and $T^*TQ$ and,  moreover,   $J_{TT^*Q}  = J_{T^*TQ} \circ A_Q$.
\end{theorem}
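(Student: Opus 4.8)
The plan is to use the factorization $A_Q = R^{-1} \circ \emph{b}_{\omega_Q}$ together with the results already established. The $G$-equivariance of $A_Q$ will follow immediately, since both $\emph{b}_{\omega_Q}\colon TT^*Q \to T^*T^*Q$ and $R\colon T^*TQ \to T^*T^*Q$ are $G$-equivariant: the former by Theorem~\ref{prop: tangent reduction}(1) applied to the cotangent bundle $(T^*Q, \omega_Q)$ (as noted in the paragraph following that theorem), and the latter by Lemma~\ref{lemma:R}(1). Hence $A_Q = R^{-1}\circ \emph{b}_{\omega_Q}$ is a composition of $G$-equivariant maps, so it is $G$-equivariant.

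For the momentum map identity, I would simply chain the two relations already available. From Theorem~\ref{prop: tangent reduction}(2) applied to $(T^*Q, \omega_Q)$ we have $J_{T^*T^*Q}\circ \emph{b}_{\omega_Q} = -J_{TT^*Q}$, and from Lemma~\ref{lemma:R}(2) we have $J_{T^*T^*Q}\circ R = -J_{T^*TQ}$. Composing the first relation with $R^{-1}$ on the right and using the second to rewrite $J_{T^*T^*Q}\circ R^{-1}$, one obtains $J_{TT^*Q} = J_{T^*TQ}\circ A_Q$. Concretely: from Lemma~\ref{lemma:R}(2), $J_{T^*T^*Q} = -J_{T^*TQ}\circ R^{-1}$, so
\[
-J_{TT^*Q} = J_{T^*T^*Q}\circ \emph{b}_{\omega_Q} = -J_{T^*TQ}\circ R^{-1}\circ \emph{b}_{\omega_Q} = -J_{T^*TQ}\circ A_Q,
\]
which is the claimed equality.

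There is essentially no obstacle here: all the work was done in Theorem~\ref{prop: tangent reduction} and Lemma~\ref{lemma:R}, and the theorem is just the bookkeeping that combines them. The only point requiring a moment's care is making sure the actions referred to in the statement are precisely the ones appearing in those two inputs --- namely that the $G$-Hamiltonian action on $T^*TQ$ is the cotangent lift $\phi^{T^*TQ}$ of the tangent-lifted action $\phi^{TQ}$, while the action on $TT^*Q$ is the tangent lift $\phi^{TT^*Q}$ of the cotangent-lifted action $\phi^{T^*Q}$; both $\emph{b}_{\omega_Q}$ and $R$ were set up to intertwine exactly these, and the momentum maps $J_{TT^*Q}$, $J_{T^*TQ}$, $J_{T^*T^*Q}$ are the ones defined earlier in the excerpt. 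Once this identification of actions is made explicit, the proof is a one-line composition.
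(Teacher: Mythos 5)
Your proposal is correct and is exactly the paper's argument: the paper also deduces the theorem as an immediate consequence of Theorem~\ref{prop: tangent reduction} and Lemma~\ref{lemma:R} via the factorization $A_Q = R^{-1}\circ \emph{b}_{\omega_Q}$, and your explicit chaining of the two momentum-map relations is just the bookkeeping the paper leaves implicit.
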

\begin{proof} This is now an easy consequence of Theorem~\ref{prop: tangent reduction} and Lemma~\ref{lemma:R}.
\end{proof}

The results above imply the existence of the reduced maps
\begin{align*}
[R_0]&\colon J_{T^*TQ}^{-1}(0)/G \to J_{T^*T^*Q}^{-1}(0)/G,\\
[(A_Q)_{0}]&\colon J^{-1}_{TT^*Q}(0)/G \to  J^{-1}_{T^*TQ}(0)/G,
\end{align*}
defined by
\begin{equation}\label{RRedZer}
[R_{0}] \circ \emph p_{J^{-1}_{T^*TQ}(0)}= \emph p_{J^{-1}_{T^*T^*Q}(0)} \circ R_{\mid J^{-1}_{T^*TQ}(0)}
\end{equation}
and
\[
[(A_Q)_{0}] \circ \emph p_{J^{-1}_{TT^*Q}(0)}= \emph p_{J^{-1}_{T^*TQ}(0)} \circ {A_Q}_{\mid J^{-1}_{TT^*Q}(0)},
\]
respectively. Taking into account the definition of Tulczyjew's diffeomorphism,  it is clear that
$$[(A_Q)_0] = [R_0]^{-1}\circ [(\emph{b}_{\omega_Q})_0].$$
In fact we could have taken the above expression as an alternative definition. It is then analogous to the definition of the Tulczyjew's diffeomorphism. For this reason,  we shall refer to $[(A_Q)_0]$  as \emph{the reduced Tulczyjew diffeomorphism}.

Before we enter the discussion about Lagrange-Poincar\'e reduction we need to introduce a few more maps. Let us consider the manifold $(J^{-1}_{T^*TQ}(0)/G, (\omega_{TQ})_0)$ (which is obtained after a cotangent reduction at $\mu = 0$) and the symplectomorphism
$$\varphi_0\colon \big{(} J^{-1}_{T^*TQ}(0)/G, (\omega_{TQ})_0\big{)}\to \big{(}T^*(TQ/G), \omega_{TQ/G}\big{)}$$
defined by (see Example~\ref{ex:cotangent reduction}):
\begin{equation}\label{defvarphi}
\big{\langle}\varphi_{0}\big{(}\emph p_{J_{T^*TQ}^{-1}(0)}(\alpha_{v_q})\big{)},T_{v_q}\emph p_{TQ}(u_{v_q})\big{\rangle}:= \big{\langle}\alpha_{v_q}, u_{v_q}\big{\rangle},
\end{equation}
for all $\alpha_{v_q}\in J^{-1}_{T^*TQ}(0)$ and $u_{v_q}\in TTQ$.

Next there is a vector bundle morphism $\Lambda\colon T^*(TQ/G) \to T(T^*Q/G)$ (over the identity in $T^*Q/G$) defined by
\begin{equation}\label{Lambda}
\Lambda = \sharp_{T^*Q/G} \circ R_{Q/G}.
\end{equation}
Here,  $\sharp_{T^*Q/G}\colon T^*(T^*Q/G) \to T(T^*Q/G)$ is the  vector bundle morphism induced by the Poisson structure on $T^*Q/G$ (see (\ref{VecBunMorPois}))  and the isomorphism
$$R_{Q/G}\colon T^*(TQ/G)\to T^*(T^*Q/G)$$
will be described next (see \cite{KoUr} for a general definition).

First of all, we describe how $T^*(TQ/G)$ can be interpreted as a vector bundle over $T^*Q/G$. Note that $T^*(TQ/G)$ is a vector subbundle (over $TQ/G$) of the vector bundle $[\pi_{TQ}]\colon T^*TQ/G \to TQ/G$, with inclusion
\[
i\colon T^*(TQ/G)\to T^*TQ/G
\]
defined by
\[
i\big{(}\alpha_{\emph p_{TQ}(v_q)}\big{)} = \emph p_{T^*TQ}\big{(}(T^*_{v_q}\emph p_{TQ})(\alpha_{\emph p_{TQ}(v_q)})\big{)},
\]
for $\alpha_{\emph p_{TQ}(v_q)}\in T^*(TQ/G)$ and $v_q\in TQ$. $T^*TQ/G$ is a vector bundle over $T^*Q/G$ and with vector bundle projection $[\mathsf{v}^*]\colon T^*TQ/G \to T^*Q/G$ induced by  $\mathsf{v}^*$. Let the vector bundle projection
\[
\widetilde{\mathsf{v}^*}\colon T^*(TQ/G)\to T^*Q/G
\]
be the composition
\[
\widetilde{\mathsf{v}^*}= [\mathsf{v}^*]\circ i.
\]
We can now mimic the construction of $R$ to introduce the vector bundle map $R_{Q/G}$. Explicitly, $R_{Q/G}\colon T^*(TQ/G)\to T^*(T^*Q/G)$ is the isomorphism between the vector bundles $\widetilde{\mathsf{v}^*}\colon T^*(TQ/G)\to T^*Q/G$ and $\pi_{T^*Q/G}\colon T^*(T^*Q/G) \to T^*Q/G$ such that:
\begin{align}\label{Rred}
& \Big{\langle} R_{Q/G}\big{(}\alpha_{\emph p_{TQ}(v_q)}\big{)}, W_{\widetilde{\mathsf{v}^*}\rbra \alpha_{  \it  p_{TQ}(v_q)}\rket}\Big{\rangle} = \\
\nonumber & \qquad   -\Big{\langle}\alpha_{\emph p_{TQ}(v_q)}, \bar W_{\emph{p}_{TQ}(v_q)}\Big{\rangle}+ \Big{\langle}   W_{\widetilde{\mathsf{v}^*}\big{(}\alpha_{ \it p_{TQ}(v_q)}\big{)}},\bar W_{\emph{p}_{TQ}(v_q)}\Big{\rangle}^{T},
\end{align}
for all $\alpha_{\emph{p}_{TQ}(v_q)}\in T^*(TQ/G)$ and $  W_{\widetilde{\mathsf{v}^*}( \alpha_{\it p_{TQ}(v_q)})}\in T(T^*Q/G)$, with $\bar W_{\emph{p}_{TQ}(v_q)}\in T(TQ/G)$ satisfying
$$T[\tau_Q]\big{(}\bar W_{\emph p_{TQ}(v_q)}\big{)}= T[\pi_Q] \Big{(} W_{\widetilde{\mathsf{v}^*}(\alpha_{\it p_{TQ}(v_q)})} \Big{)},$$
where $[\tau_Q]\colon TQ/G \to Q/G$ and $[\pi_Q]\colon T^*Q/G \to Q/G$ are the canonical projections and $\langle\cdot,\cdot\rangle^T\colon  T(T^*Q/G)\times_{T(Q/G)}T( TQ/G)\to \mathbb{R}$  is the  tangent map of the natural pairing $\langle\cdot,\cdot\rangle \colon T^*Q/G\times_{Q/G} TQ/G\to \mathbb{R}$.

The relation between the different maps introduced so far is summarized in the following Lemma:
\begin{lemma}\label{LemmaLambdaRedTulDif} The diagram
\[
\xymatrix{
J^{-1}_{TT^*Q}(0)/G\ar[d]_{\Xi} &&J^{-1}_{T^*TQ}(0)/G\ar[ll]_{[(A_Q)_{0}]^{-1}}\\
T(T^*Q/G)&& T^*(TQ/G)\ar[ll]^{\Lambda}\ar[u]_{({\varphi_0})^{-1}}
}
\]
is commutative.
\end{lemma}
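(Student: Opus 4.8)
The plan is to peel off the two anti-symplectomorphisms that make up Tulczyjew's reduced diffeomorphism and reduce everything to a single square comparing the two cotangent-reduction identifications. Recall that $[(A_Q)_0] = [R_0]^{-1}\circ[(\emph{b}_{\omega_Q})_0]$, so $[(A_Q)_0]^{-1} = [(\emph{b}_{\omega_Q})_0]^{-1}\circ[R_0]$, and that $\Lambda = \sharp_{T^*Q/G}\circ R_{Q/G}$ by (\ref{Lambda}). Moreover, the computation in the proof of Lemma~\ref{relationHam} in fact establishes the pointwise identity $\Xi\circ[(\emph{b}_{\omega_Q})_0]^{-1}\circ\Psi_0^{-1} = \sharp_{T^*Q/G}$, i.e. $\Xi\circ[(\emph{b}_{\omega_Q})_0]^{-1} = \sharp_{T^*Q/G}\circ\Psi_0$. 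Combining these, the asserted equality $\Xi\circ[(A_Q)_0]^{-1} = \Lambda\circ\varphi_0$ follows once one knows that the square
\[
\xymatrix{
J^{-1}_{T^*TQ}(0)/G \ar[rr]^{[R_0]}\ar[d]_{\varphi_0} && J^{-1}_{T^*T^*Q}(0)/G\ar[d]^{\Psi_0}\\
T^*(TQ/G)\ar[rr]_{R_{Q/G}} && T^*(T^*Q/G)
}
\]
commutes (then compose with $\sharp_{T^*Q/G}$). So after recording this reduction, the whole proof is devoted to the square $\Psi_0\circ[R_0] = R_{Q/G}\circ\varphi_0$.

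To prove the square I would evaluate both sides on a class $\emph p_{J^{-1}_{T^*TQ}(0)}(\alpha_{v_q})$, with $\alpha_{v_q}\in J^{-1}_{T^*TQ}(0)$, and compare the resulting covectors in $T^*(T^*Q/G)$ by pairing against an arbitrary tangent vector $T_{\beta_q}\emph p_{T^*Q}(W_{\beta_q})$, where $\beta_q:=\mathsf{v}^*(\alpha_{v_q})$ and $W_{\beta_q}\in TT^*Q$. First a short bookkeeping check that both sides are based at $\emph p_{T^*Q}(\beta_q)$: on the left because $R$ covers the identity of $T^*Q$ and $\Psi_0$ that of $T^*Q/G$; on the right because $J^{-1}_{T^*TQ}(0)=V^\circ\emph p_{TQ}$ (Example~\ref{ex:cotangent reduction}), so (\ref{defvarphi}) gives $(T^*_{v_q}\emph p_{TQ})\big(\varphi_0(\emph p_{J^{-1}_{T^*TQ}(0)}(\alpha_{v_q}))\big)=\alpha_{v_q}$, whence $\widetilde{\mathsf{v}^*}(\varphi_0(\cdots))=\emph p_{T^*Q}(\mathsf{v}^*(\alpha_{v_q}))$ by the definitions of $i$ and $[\mathsf{v}^*]$. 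Also note that $R(\alpha_{v_q})\in J^{-1}_{T^*T^*Q}(0)$ by Lemma~\ref{lemma:R}(2), so that (\ref{Psi}) is applicable.

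The two computations then go as follows. For the left-hand side, (\ref{RRedZer}) gives $[R_0](\emph p_{J^{-1}_{T^*TQ}(0)}(\alpha_{v_q}))=\emph p_{J^{-1}_{T^*T^*Q}(0)}(R(\alpha_{v_q}))$, and (\ref{Psi}) turns the pairing into $\langle R(\alpha_{v_q}),W_{\beta_q}\rangle$; choosing $\bar W_{v_q}\in TTQ$ with $T\tau_Q(\bar W_{v_q})=T\pi_Q(W_{\beta_q})$ makes $(W_{\beta_q},\bar W_{v_q})$ admissible in (\ref{diiffeoR}), which yields $\langle R(\alpha_{v_q}),W_{\beta_q}\rangle = -\langle\alpha_{v_q},\bar W_{v_q}\rangle+\langle W_{\beta_q},\bar W_{v_q}\rangle^{T}$. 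For the right-hand side, set $\bar W:=T_{v_q}\emph p_{TQ}(\bar W_{v_q})\in T(TQ/G)$; using $[\tau_Q]\circ\emph p_{TQ}=\emph p_Q\circ\tau_Q$ and $[\pi_Q]\circ\emph p_{T^*Q}=\emph p_Q\circ\pi_Q$, the choice of $\bar W_{v_q}$ implies the compatibility $T[\tau_Q](\bar W)=T[\pi_Q](T_{\beta_q}\emph p_{T^*Q}(W_{\beta_q}))$ needed in (\ref{Rred}), which gives $\langle R_{Q/G}(\varphi_0(\cdots)),T_{\beta_q}\emph p_{T^*Q}(W_{\beta_q})\rangle = -\langle\varphi_0(\cdots),\bar W\rangle+\langle T_{\beta_q}\emph p_{T^*Q}(W_{\beta_q}),\bar W\rangle^{T}$. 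Matching term by term: the first terms agree since $\langle\varphi_0(\cdots),\bar W\rangle=\langle\alpha_{v_q},\bar W_{v_q}\rangle$ by (\ref{defvarphi}); the second terms agree because the canonical pairing $T^*Q\times_Q TQ\to\mathbb{R}$ is $G$-invariant and descends to $T^*Q/G\times_{Q/G}TQ/G\to\mathbb{R}$, so its tangent map is compatible with the projections and $\langle T\emph p_{T^*Q}(W_{\beta_q}),T\emph p_{TQ}(\bar W_{v_q})\rangle^{T}=\langle W_{\beta_q},\bar W_{v_q}\rangle^{T}$.

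The routine part of this argument is the base-point bookkeeping and the commutation relations between the various quotient projections. The one genuinely delicate point is checking that the admissibility condition (\ref{RelationWbarW}) for $R$ projects \emph{exactly} onto the compatibility condition required in the definition (\ref{Rred}) of $R_{Q/G}$, together with the compatible descent of the tangent-lifted pairing $\langle\cdot,\cdot\rangle^{T}$ from $TT^*Q\times_{TQ}TTQ$ to $T(T^*Q/G)\times_{T(Q/G)}T(TQ/G)$; this is where the $G$-invariance of the canonical pairing enters essentially, and it is the step I would write out with most care.
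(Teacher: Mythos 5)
Your proposal is correct and follows essentially the same route as the paper's proof: the same reduction via $[(A_Q)_0]^{-1}=[(\emph{b}_{\omega_Q})_0]^{-1}\circ[R_0]$ and Lemma~\ref{relationHam} to the square $\Psi_0\circ[R_0]=R_{Q/G}\circ\varphi_0$, followed by the same pairing computation using (\ref{diiffeoR}), (\ref{Rred}) and the $G$-invariant descent of $\langle\cdot,\cdot\rangle^{T}$. The only cosmetic difference is that you evaluate on an arbitrary class of $\alpha_{v_q}\in J^{-1}_{T^*TQ}(0)$ while the paper tests against covectors of the form $df(\emph{p}_{TQ}(v_q))$, which amounts to the same thing.
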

\begin{proof}
On the one hand, we know that the  inverse of the reduced Tulczyjew diffeomorphism satisfies $[(A_{Q})_0]^{-1} = [(\emph{b}_{\omega_Q})_0]^{-1}\circ [R_0]$. On the other hand, we also know from Lemma~\ref{relationHam} that the diagram
\[
\xymatrix{
J^{-1}_{TT^*Q}(0)/G\ar[d]_{\Xi} &&J^{-1}_{T^*T^*Q}(0)/G\ar[ll]_{[(\emph{b}_{\omega_Q})_{0}]^{-1}}\\
T(T^*Q/G)&& T^*(T^*Q/G)\ar[ll]^{\sharp_{T^*Q/G}}\ar[u]_{({\Psi_0})^{-1}}
}
\]
is commutative. Therefore, it is sufficient to check that the diagram
\[
\xymatrix{
J^{-1}_{T^*TQ}(0)/G\ar[rr]^{[R_0]} &&J^{-1}_{T^*T^*Q}(0)/G\ar[d]_{\Psi_0}\\
T^*(TQ/G)\ar[u]_{(\varphi_0)^{-1}}\ar[rr]_{R_{Q/G}}&& T^*(T^*Q/G)
}
\]
is commutative, for then the result will follow directly from diagram chasing:
$$\Lambda = \sharp_{T^*Q/G}\circ R_{Q/G} = (\Xi\circ [(\emph{b}_{\omega_Q})_0]^{-1}\circ \Psi^{-1}_0) \circ (\Psi_0 \circ [R_0]\circ \varphi_0^{-1}) = \Xi\circ [(A_Q)_0]^{-1} \circ \varphi_0^{-1}.$$
 Let $v_q\in TQ$, $ W_{\mathsf{v}^*( \alpha_{v_q})}\in TT^*Q$ and $f\in C^\infty(TQ/G)$ arbitrary. Using   (\ref{Psi}), (\ref{RRedZer}) and (\ref{defvarphi}), we have that
\begin{align*}
& \Big{\langle} \Big{(}\Psi_0 \circ [R_0]\circ  \varphi_0^{-1}\Big{)}\Big{(}df(\emph p_{TQ}(v_q))\Big{)}, T\emph p_{T^*Q}\Big{(}W_{\mathsf{v}^*( \alpha_{v_q})} \Big{)}\Big{\rangle} =\\
& \qquad = \Big{\langle}\Big{(}\Psi_0 \circ [R_0]\Big{)}\Big{(} \emph p_{J^{-1}_{T^*TQ}(0)}\big{(}d(f\circ \emph p_{TQ})(v_q)\big{)}\Big{)}, T\emph p_{T^*Q}\Big{(}W_{\mathsf{v}^*( \alpha_{v_q})} \Big{)} \Big{\rangle}\\
&\qquad =\Big{\langle}\Big{(}\Psi_0 \circ \emph p_{J^{-1}_{T^*T^*Q}(0)} \circ R\Big {)}\Big{(}d(f\circ \emph p_{TQ})(v_q)\Big{)}, T\emph p_{T^*Q}\Big{(} W_{\mathsf{v}^*( \alpha_{v_q})} \Big{)} \Big{\rangle}\\
& \qquad =\Big{\langle} R\big{(}d(f\circ \emph p_{TQ})(v_q)\big{)}, W_{\mathsf{v}^*( \alpha_{v_q})} \Big{\rangle}.
\end{align*}

Now, take $\bar W_{v_q}\in TTQ$ satisfying (\ref{RelationWbarW}). From (\ref{diiffeoR}),  it follows that
\begin{align*}
& \Big{\langle}R\big{(}
d(f\circ \emph p_{TQ})(v_q)\big{)},   W_{\mathsf{v}^*( \alpha_{v_q})}  \Big{\rangle} =\\
& \qquad = -  \Big{\langle}d(f\circ \emph p_{TQ})(v_q), \bar W_{v_q}  \Big{\rangle} +  \Big{\langle} W_{\mathsf{v}^*( \alpha_{\it v_q})}, \bar W_{v_q} \Big{\rangle}^T\\
& \qquad = - \Big{\langle} df(\emph p_{TQ}(v_q)), T\emph p_{TQ}(\bar W_{v_q})\Big{\rangle} + \Big{\langle}T\emph p_{T^*Q}W_{\mathsf{v}^*( \alpha_{v_q})}, T\emph p_{TQ}(\bar W_{v_q})\Big{\rangle}^T\\
&\qquad = \Big{\langle} R_{Q/G}\big{(}df(\emph p_{TQ}(v_q))\big{)},  T\emph p_{T^*Q}\big{(}W_{\mathsf{v}^*( \alpha_{v_q})} \big{)}\Big{\rangle}.
\end{align*}
where in the equality between the second and the third lines,we have used that if $\langle \cdot, \cdot \rangle^{T}: T(TQ) \times_{TQ} T(T^*Q) \to \mathbb{R}$ and  $\langle \cdot, \cdot \rangle^{\hat{T}}: T(TQ/G) \times_{T(Q/G)} T(T^*Q/G) \to \mathbb{R}$ are the canonical pairings then
\[
\langle X_{v_q}, Y_{\alpha_q}\rangle^{T} = T\langle \cdot, \cdot \rangle (X_{v_q}, Y_{\alpha_q}) = \langle (T_{v_q}p_{TQ})(X_{v_{q}}), (T_{\alpha_q}p_{T^*Q})(Y_{\alpha_q}) \rangle ^{\hat{T}},
\]
for $X_{v_q} \in T(TQ)$ and $Y_{\alpha_q} \in T(T^*Q)$ such that $(T_{v_q}p_{TQ})(X_{v_{q}}) = (T_{\alpha_q}p_{T^*Q})(Y_{\alpha_q}) $.

This concludes the proof.
\end{proof}
Let $L\colon TQ\to \mathbb{R}$ be a $G$-invariant Lagrangian function. Much like in the Hamiltonian case, we can show that  $dL(TQ)/G$ is a Lagrangian submanifold of $J_{T^*TQ}^{-1}(0)/G$ which, by means of the reduced Tulczyjew's diffeomorphism, can be mapped into Lagrangian submanifold of $J^{-1}_{TT^*Q}(0)/G$. More precisely, let $S_L$ be the invariant Lagrangian submanifold of the symplectic manifold $(TT^*Q, \omega_Q^c)$ defined by $S_L= (A_Q)^{-1}(dL(TQ))$. The space of orbits $S_L/G$ is Lagrangian submanifold of $J^{-1}_{TT^*Q}(0)/G$ which satisfies:
\begin{equation}\label{IgLagSubL}
S_L/G = [(A_Q)_0]^{-1}(dL(TQ)/G).
\end{equation}
An alternative description of this submanifold is:
$$S_L/G = S_l := ( [(A_Q)_0]^{-1}\circ  \varphi_0^{-1} \circ dl ) (TQ/G), $$
where  $l\colon TQ/G \to \mathbb{R}$ is the reduced Lagrangian function characterized by the condition $L = l \circ \emph p_{TQ}$.

This implies the existence of a one-to-one correspondence between curves in $TQ/G$ and curves in the Lagrangian submanifold $S_l$ defined as follows: if $\gamma\colon I \to TQ/G$ is a curve in $TQ/G$, then
$$t\to ([(A_Q)_0]^{-1}\circ \varphi_0^{-1} \circ dl)(\gamma(t))$$
is the corresponding curve in $S_l$.

As was the case for the Hamilton-Poincar\'e equations, there exist many different geometric frameworks for Lagrange-Poincar\'e reduction. We will use a somewhat indirect approach. In~\cite{LeMaMa} it is shown that the Lagrange-Poincar\'e equations can be thought of as Euler-Lagrange equations on a Lie algebroid, where the Lie algebroid is the Atiyah algebroid $TQ/G$. In~\cite{GrGrUr} the authors give a characterization of the set of solutions of the Euler-Lagrange  equations on a Lie algebroid, which applied to the case of the Atiyah algebroid is as follows: a curve $\sigma\colon I \to TQ/G $ is a solution of the Lagrange--Poincar\'e equations if, and only if, it satisfies the equation
\begin{equation}\label{SolLagrPoin}
\frac{d}{dt} (\mathcal{F}_l \circ\sigma)(t)= \Lambda (dl(\sigma(t)),
\end{equation}
where $\mathcal{F}_l\colon TQ/G\to T^*Q/G$ is the Legendre transformation associated with $l$ defined by
\begin{equation}\label{trans}
\mathcal{F}_l(\emph p_{TQ}(v_q)) = \widetilde{\mathsf{v}^*}(dl(\emph p_{TQ}(v_q))),
\end{equation}
for all $v_q \in TQ$.

\begin{theorem}
Let $L\colon TQ\to \mathbb{R}$ be a $G$-invariant Lagrangian function. Then, in the one-to-one correspondence between curves in $TQ/G$ and curves in $S_l$, the solutions of the Lagrange-Poincar\'e equations correspond with curves in $S_l$ whose image by $\Xi$ are tangent lifts of curves in $T^*Q/G$.
\end{theorem}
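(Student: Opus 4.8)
The plan is to run exactly the same argument as in the Hamilton-Poincar\'e case, with Lemma~\ref{LemmaLambdaRedTulDif} replacing the role played there by Lemma~\ref{relationHam}. Given a curve $\sigma\colon I\to TQ/G$, denote by $\bar\gamma(t)=([(A_Q)_0]^{-1}\circ\varphi_0^{-1}\circ dl)(\sigma(t))$ the associated curve in $S_l$. Lemma~\ref{LemmaLambdaRedTulDif} says precisely that $\Xi\circ[(A_Q)_0]^{-1}\circ\varphi_0^{-1}=\Lambda$, so $(\Xi\circ\bar\gamma)(t)=\Lambda(dl(\sigma(t)))$, and the theorem then becomes a comparison of this identity with the characterization (\ref{SolLagrPoin}) of Lagrange-Poincar\'e solutions.

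First I would identify the base point of the tangent vector $(\Xi\circ\bar\gamma)(t)$. Since $\sharp_{T^*Q/G}$ is a vector bundle morphism over the identity of $T^*Q/G$, we have $\tau_{T^*Q/G}\circ\sharp_{T^*Q/G}=\pi_{T^*Q/G}$; and $R_{Q/G}$ is, by construction, a vector bundle isomorphism between $\widetilde{\mathsf{v}^*}$ and $\pi_{T^*Q/G}$, so $\pi_{T^*Q/G}\circ R_{Q/G}=\widetilde{\mathsf{v}^*}$. Composing, $\tau_{T^*Q/G}\circ\Lambda=\widetilde{\mathsf{v}^*}$, and together with the definition (\ref{trans}) of the reduced Legendre transformation this yields $\tau_{T^*Q/G}\circ\Lambda\circ dl=\mathcal{F}_l$, hence $\tau_{T^*Q/G}\circ\Xi\circ\bar\gamma=\mathcal{F}_l\circ\sigma$. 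Consequently $\Xi\circ\bar\gamma$ is a tangent lift of some curve in $T^*Q/G$ if and only if it is the tangent lift of the particular curve $\mathcal{F}_l\circ\sigma$, i.e. if and only if $(\Xi\circ\bar\gamma)(t)=\frac{d}{dt}(\mathcal{F}_l\circ\sigma)(t)$.

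The forward implication is then immediate: if $\sigma$ solves (\ref{SolLagrPoin}), then $(\Xi\circ\bar\gamma)(t)=\Lambda(dl(\sigma(t)))=\frac{d}{dt}(\mathcal{F}_l\circ\sigma)(t)$, so $\Xi\circ\bar\gamma$ is the tangent lift of $\mathcal{F}_l\circ\sigma$. For the converse I would take a curve $\bar\gamma$ in $S_l$ whose image under $\Xi$ is the tangent lift of a curve $\beta$ in $T^*Q/G$; since $[(A_Q)_0]^{-1}\circ\varphi_0^{-1}\circ dl$ is a diffeomorphism onto $S_l$, there is a unique $\sigma\colon I\to TQ/G$ with $\bar\gamma(t)=([(A_Q)_0]^{-1}\circ\varphi_0^{-1}\circ dl)(\sigma(t))$, and the base-point computation forces $\beta=\tau_{T^*Q/G}\circ\Xi\circ\bar\gamma=\mathcal{F}_l\circ\sigma$; the tangent-lift hypothesis then reads $\Lambda(dl(\sigma(t)))=(\Xi\circ\bar\gamma)(t)=\frac{d}{dt}(\mathcal{F}_l\circ\sigma)(t)$, which is (\ref{SolLagrPoin}) for $\sigma$.

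I expect the only steps needing genuine care to be the verification that $\tau_{T^*Q/G}\circ\Lambda=\widetilde{\mathsf{v}^*}$, which is a short diagram chase relying on the fact that $\Lambda$, $\sharp_{T^*Q/G}$ and $R_{Q/G}$ all cover the identity on $T^*Q/G$, and, in the converse direction, making sure the curve $\sigma$ extracted from $\bar\gamma$ is exactly the one attached to it by the one-to-one correspondence, so that no spurious solutions are introduced. Everything else is a purely formal consequence of Lemma~\ref{LemmaLambdaRedTulDif} and (\ref{SolLagrPoin}), in complete parallel with the proof of the corresponding Hamilton-Poincar\'e theorem.
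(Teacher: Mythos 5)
Your proposal is correct and follows essentially the same route as the paper: both directions rest on Lemma~\ref{LemmaLambdaRedTulDif} together with the characterization (\ref{SolLagrPoin}), and your base-point computation $\tau_{T^*Q/G}\circ\Lambda\circ dl=\mathcal{F}_l$ is exactly the step the paper carries out by invoking (\ref{Lambda}), (\ref{Rred}) and (\ref{trans}). The only difference is presentational, in that you isolate that computation up front rather than embedding it in the converse direction.
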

\begin{proof}
Let us assume that a curve $\sigma \colon I \to TQ/G$ is a solution of the Lagrange-Poincar\'e equations for $L$. Then, using (\ref{SolLagrPoin}) and Lemma~\ref{LemmaLambdaRedTulDif},  it follows that
$$ \frac{d}{dt}(\mathcal{F}_{l}(\sigma(t))) = \Lambda(dl(\sigma(t))) =( \Xi \circ [(A_Q)_0]^{-1}\circ \varphi_{0}^{-1})(dl(\sigma(t))).$$
Thus, if we take the curve $\bar\sigma\colon I \to S_l$ in $S_l$ associated with $\sigma$
$$\bar\sigma(t)= ([(A_Q)_0]^{-1} \circ \varphi^{-1}_0) (dl (\sigma(t))), $$
we deduce that the curve $\Xi \circ \bar\sigma$ is the tangent lift of the curve $\mathcal{F}_l \circ \sigma$.

Conversely, let $\bar\sigma\colon I \to S_l$ be a curve in $S_l$ such that
\begin{equation}\label{relGammaSigmma}
(\Xi\circ \bar\sigma)(t)=\frac{d}{dt}\gamma(t),
\end{equation}
where $\gamma\colon I \to T^*Q/G$ is a curve in $T^*Q/G$. Suppose that $\sigma\colon I  \to TQ/G$ is the curve on $TQ/G$ associated with $\bar\sigma$, that is,
\begin{equation}\label{CurveSub}
\bar\sigma(t)= ([(A_Q)_0]^{-1}\circ \varphi_0^{-1})(dl(\sigma(t))), \quad \forall t\in I.
\end{equation}
Then, using (\ref{relGammaSigmma}) and Lemma~\ref{LemmaLambdaRedTulDif}, it follows that
$$\gamma(t)=(\tau_{T^*Q/G}\circ \Xi \circ \bar\sigma)(t)=(\tau_{T^*Q/G}\circ \Lambda)(dl(\sigma(t))).$$
From (\ref{Lambda}), (\ref{Rred}) and (\ref{trans}), we obtain that
$$\gamma(t)=\widetilde{\mathsf{v}^*}(dl(\sigma(t)))= \mathcal{F}_l(\sigma(t)).$$
Using (\ref{relGammaSigmma}) and (\ref{CurveSub}) and Lemma  \ref{LemmaLambdaRedTulDif}, this proves that
$$\frac{d}{dt}(\mathcal{F}_l \circ \sigma)(t)= \Lambda(dl(\sigma(t))).$$
Therefore, $\sigma$ is a solution of the Lagrange-Poincar\'e equations for $L$.
\end{proof}

Using this theorem, we obtain an intrinsic description of the Lagrange-Poincar\'e equations.

\begin{corollary}
Let $L\colon TQ\to \mathbb{R}$ be a $G$-invariant Lagrangian function, $l\colon TQ/G \to \mathbb{R}$ the reduced Lagrangian function and $\mathcal{F}_l\colon TQ/G\to T^*Q/G$ the Legendre transformation associated with $l$. A curve $\sigma\colon I \to TQ/G$ is a solution of the Lagrange-Poincar\'e equations  for $L$ if, and only if, the image by $\Xi$ of the corresponding curve in $S_l$,
$$t \to ([(A_Q)_0]^{-1}\circ \varphi_0^{-1})(dl(\sigma(t))),$$
is the tangent lift of the curve $\mathcal{F}_l \circ \sigma$.
\end{corollary}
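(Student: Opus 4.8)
The plan is to obtain the corollary as an immediate repackaging of the theorem just proved; no genuinely new argument is required, and the only real task is to name the base curve of the tangent lift explicitly as $\mathcal{F}_l\circ\sigma$.

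First I would recall the one-to-one correspondence set up above: to a curve $\sigma\colon I\to TQ/G$ one associates the curve $\bar\sigma=([(A_Q)_0]^{-1}\circ\varphi_0^{-1}\circ dl)\circ\sigma$ in $S_l$, and conversely every curve in $S_l$ is of this form because $S_l=([(A_Q)_0]^{-1}\circ\varphi_0^{-1}\circ dl)(TQ/G)$ and $dl$ is a section of the canonical projection $T^*(TQ/G)\to TQ/G$. By the preceding theorem, $\sigma$ solves the Lagrange-Poincar\'e equations for $L$ if and only if $\Xi\circ\bar\sigma$ is the tangent lift of some curve $\gamma\colon I\to T^*Q/G$.

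The key step is to identify $\gamma$. Applying $\tau_{T^*Q/G}$ to a tangent lift recovers its base curve, so
\[
\gamma=\tau_{T^*Q/G}\circ\Xi\circ\bar\sigma=\big(\tau_{T^*Q/G}\circ\Xi\circ[(A_Q)_0]^{-1}\circ\varphi_0^{-1}\big)\circ dl\circ\sigma.
\]
By Lemma~\ref{LemmaLambdaRedTulDif} the composition $\Xi\circ[(A_Q)_0]^{-1}\circ\varphi_0^{-1}$ equals $\Lambda$, and from the definition (\ref{Lambda}) of $\Lambda=\sharp_{T^*Q/G}\circ R_{Q/G}$ together with (\ref{Rred}) and the defining relation (\ref{trans}) of the reduced Legendre map one gets $\tau_{T^*Q/G}\circ\Lambda\circ dl=\widetilde{\mathsf{v}^*}\circ dl=\mathcal{F}_l$. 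Hence $\gamma=\mathcal{F}_l\circ\sigma$, which is exactly the statement of the corollary; the converse implication is the same chain of equalities read in reverse, once more invoking the theorem.

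There is no serious obstacle here. The only point needing a little care is the bookkeeping of base points when projecting along $\tau_{T^*Q/G}$, i.e. verifying that $\tau_{T^*Q/G}\circ\Lambda$ restricted to the image of $dl$ coincides with the reduced Legendre transformation $\mathcal{F}_l=\widetilde{\mathsf{v}^*}\circ dl$; this is precisely what formulas (\ref{Lambda})--(\ref{trans}) encode, and it was already used inside the proof of the theorem. Thus the corollary is a clean restatement of that theorem once the base curve is correctly named.
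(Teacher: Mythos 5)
Your proposal is correct and follows essentially the same route as the paper: the corollary is stated there as an immediate consequence of the preceding theorem, and the identification of the base curve via $\tau_{T^*Q/G}\circ\Lambda\circ dl=\widetilde{\mathsf{v}^*}\circ dl=\mathcal{F}_l$ (using Lemma~\ref{LemmaLambdaRedTulDif} together with (\ref{Lambda}), (\ref{Rred}) and (\ref{trans})) is exactly the chain of equalities already carried out inside the paper's proof of that theorem.
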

The following diagram illustrates the previous situation
\[
\xymatrix{
&S_l(dl(\sigma(t)))\ar@/^/[drr]&&&\\
T^*(TQ/G)\ar[ddr]&&&J^{-1}_{TT^*Q}(0)/G\ar[ddll]\ar[dd]\ar[drrr]^{\Xi}\ar[lll]_{\varphi_0 \circ [(A_Q)_0]}&\\
&&&&&&T(T^*Q/G) \ar[dlll]_{\tau_{T^*Q/G}} \\
&TQ/G\ar[uul]<1ex>^{dl}\ar[rr]^{\mathcal{F}_l}&&T^*Q/G&\\
&&I\ar[ul]^{\sigma}\ar[ur]^{\mathcal{F}_l\circ \sigma} \ar[urrurr]_{\frac{d}{dt}({\mathcal F}_{l} \circ \sigma)}&&
}
\]
\begin{remark} Using the above results and proceeding as in the Hamiltonian side (see Section \ref{Scases}) one may obtain a
description  of Euler-Poincar\'e equations (that is, Lagrange-Poincar\'e equations for the particular case when the configuration
space is a Lie group) in terms of Lagrangian submanifolds of a symplectic manifold.
\end{remark}

\section{The equivalence between the reduced Lagrangian and Hamiltonian formalism}\label{Equiv}
Consider the Liouville vector field $\Delta$ on $TQ$ given by $\Delta(v_q) = (v_q)_{v_q}^\mathsf{v}$, for all $v_q\in TQ$. Suppose that $L\colon TQ\to \mathbb{R}$ is an invariant hyperregular Lagrangian and consider its ($G$-invariant) energy $E_L=\Delta(L)-L$. The corresponding Hamiltonian function $H=E_L\circ \mathcal{F}_L^{-1}$ is also $G$-invariant. In many papers (see e.g.~\cite{LeRo, Tlcz76a, Tlcz76b}) it has been shown that the submanifolds
\[
S_L= (A_Q)^{-1}(dL(TQ))\quad \mbox{and}\quad S_H= (\emph{b}_{\omega_Q})^{-1}(dH(T^*Q))
\]
coincide.

On the reduced level, by making use of (\ref{IqLagSubH}),  (\ref{IgLagSubL}) and the previous fact, we obtain:
\begin{theorem}
Let $L$ be an hyperregular Lagrangian. If $l\colon TQ/G\to \mathbb{R}$ and $h\colon T^*Q/G\to \mathbb{R}$ are the reduced Lagrangian and Hamiltonian functions and $S_l$, $S_h$ are the reduced Lagrangian submanifolds of the symplectic manifold $J^{-1}_{TT^*Q}(0)/G$, then $S_l = S_h$.
\end{theorem}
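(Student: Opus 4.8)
The plan is to derive the reduced identity directly from its unreduced counterpart $S_L = S_H$ by passing to the $G$-quotient, the only genuinely necessary observations being that both objects already live at the zero level of the momentum map and that, at $\mu = 0$, reduction means quotienting by the full group $G$.

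First I would check that $S_L$ and $S_H$ are both $G$-invariant, closed, connected, embedded Lagrangian submanifolds of $(TT^*Q,\omega_Q^c)$ that are contained in $J^{-1}_{TT^*Q}(0)$. Invariance and the submanifold properties follow because $S_H = \emph{b}_{\omega_Q}^{-1}(dH(T^*Q))$ and $S_L = A_Q^{-1}(dL(TQ))$ are diffeomorphic images, under the $G$-equivariant (anti)symplectomorphisms $\emph{b}_{\omega_Q}$ and $A_Q$, of the $G$-invariant graphs $dH(T^*Q)$ and $dL(TQ)$. The containment in the zero level set follows from Example~\ref{LagranIn} — which gives $dH(T^*Q)\subseteq J^{-1}_{T^*T^*Q}(0)$ and $dL(TQ)\subseteq J^{-1}_{T^*TQ}(0)$ — together with the momentum-map relations $J_{TT^*Q} = -\,J_{T^*T^*Q}\circ \emph{b}_{\omega_Q}$ (Theorem~\ref{prop: tangent reduction}) and $J_{TT^*Q} = J_{T^*TQ}\circ A_Q$ (equivariance of Tulczyjew's diffeomorphism, Lemma~\ref{lemma:R}). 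Hence
\[
S_H,\ S_L \subseteq J^{-1}_{TT^*Q}(0).
\]

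Next I would invoke the classical fact recalled just before the statement — see, e.g., \cite{LeRo,Tlcz76a,Tlcz76b} — that for a hyperregular $L$ with energy $E_L = \Delta(L)-L$ and $H = E_L\circ \mathcal{F}_L^{-1}$ one has $S_L = S_H$ as subsets of $TT^*Q$. Since these sets coincide and are $G$-invariant, their images under the quotient projection $\emph p_{J^{-1}_{TT^*Q}(0)}\colon J^{-1}_{TT^*Q}(0)\to J^{-1}_{TT^*Q}(0)/G$ coincide as well, and by Lemma~\ref{lemma:Quo} each of $S_L/G$ and $S_H/G$ carries the (unique) structure of a closed connected embedded submanifold of $J^{-1}_{TT^*Q}(0)/G$; therefore $S_L/G = S_H/G$ as embedded submanifolds. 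It is here that reduction at $\mu=0$ is used: then $G_\mu = G$ (cf. Remark~\ref{rem:reduction}), so the two reductions are taken with respect to one and the same group action. To conclude, one identifies these quotients with the submanifolds in the statement: by (\ref{IgLagSubL}) and the alternative description that follows it, $S_l = S_L/G$, while by (\ref{IqLagSubH}) and the description that follows it, $S_h = S_H/G$; combining the three displays gives $S_l = S_h$.

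I do not expect a real obstacle: the mathematical content sits entirely in the (quoted) unreduced equality $S_L = S_H$, and in the fact — already recorded in equations (\ref{IgLagSubL}) and (\ref{IqLagSubH}) — that $S_l$ and $S_h$ are nothing but the set-theoretic orbit spaces $S_L/G$ and $S_H/G$ inside the common reduced manifold $J^{-1}_{TT^*Q}(0)/G$. The only point requiring care, already flagged above, is to stay at the regular value $\mu = 0$, where the isotropy group equals $G$ so that both sides are reduced by the same action; everything else is bookkeeping.
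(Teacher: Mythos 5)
Your argument is correct and is essentially the paper's own proof: the paper likewise deduces $S_l=S_h$ directly from the unreduced identity $S_L=S_H$ for hyperregular $L$ together with the identifications $S_h=S_H/G$ and $S_l=S_L/G$ recorded in (\ref{IqLagSubH}) and (\ref{IgLagSubL}). Your additional verifications (invariance, containment in the zero level set, and that $G_0=G$) merely make explicit the bookkeeping the paper leaves implicit.
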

The \emph{reduced Tulczyjew triple} below illustrates the situation.
\[
\xymatrix{
&S_l\ar@/^/[dr]&=&S_h\ar@/_/[dl]&\\
T^*(TQ/G)\ar[rd]^{\pi_{TQ/G}}&&J^{-1}_{TT^*Q}(0)/G\ar[ll]_{\varphi_0 \circ [(A_Q)_0]}\ar[rr]^{\Psi_0 \circ [(\emph{b}_{\omega_Q})_0]}\ar[ld]_{[(T\pi_Q)_0]}\ar[rd]^{[(\tau_{T^*Q})_0]}&& T^*(T^*Q/G)\ar[dl]_{\pi_{T^*Q/G}} \\
& TQ/G\ar[dr]_{[\tau_Q]}\ar[ul]<1ex>^{dl}\ar[rr]^{\mathcal{F}_l}&&T^*Q/G\ar[dl]^{[\pi_Q]}\ar[ur]<-1ex>_{dh}&\\
&&Q/G&&
}
\]
Here, $[(T\pi_Q)_0]\colon J^{-1}_{TT^*Q}(0)/G \to TQ/G$ (respectively, $[(\tau_{T^*Q})_{0}]: J^{-1}_{TT^*Q}(0)/G \to T^*Q/G$) is the canonical projection induced by the vector bundle projection \linebreak $(T\pi_Q)_0 = (T\pi_Q)_{\mid J^{-1}_{TT^*Q}(0)} \colon J^{-1}_{TT^*Q}(0)  \to TQ$ (respectively, $(\tau_{T^*Q})_{0} = (\tau_{T^*Q})_{|J^{-1}_{TT^*Q}(0)}:J^{-1}_{TT^*Q}(0)  \to T^*Q$).\\

\section{Conclusions and future work}
In this paper, we have described solutions of the Hamilton-Poincar\'e (respectively, Lagrange-Poincar\'e) equations in terms of Lagrangian submanifolds, thereby obtaining a reduced Tulczyjew triple entirely consisting of symplectic manifolds. In this section, we first want to place our results better in the context of the existing literature.

Note that, to obtain the dynamics, we have extensively made use of the map $\Xi$ that we had defined in expression~\eqref{Xi}. This map, actually, allows one to relate our triple with one that appeared in~\cite{GrGrUr}, associated with an arbitrary Lie algebroid $A$. If we apply the theory of \cite{GrGrUr} to the particular case when the Lie algebroid $A$ is the Atiyah algebroid $[\tau_Q]\colon TQ/G\to Q/G$ associated with the principal $G$-bundle $\emph{p}_Q\colon Q\to Q/G$, then the resultant construction is the following diagram:
\[
\xymatrix{
T^*(TQ/G)\ar[rd]^{\pi_{TQ/G}}\ar[rrr]^{\Lambda}&&&T(T^*Q/G)\ar[dr]^{\tau_{T^*Q/G}}\ar[ddl]^{T[\pi_Q]}&& T^*(T^*Q/G)\ar[dl]_{\pi_{T^*Q/G}}\ar[ll]_{\sharp_{T^*Q/G}} \\
& TQ/G\ar[dr]^{\widetilde{T\emph{p}_Q}}&&&T^*Q/G&\\
&&T(Q/G)&&&
}
\]
The space $T(T^*Q/G)$ is not, in general, a symplectic manifold. In fact, the complete lift of the linear Poisson structure on $T^*Q/G$ defines a (non symplectic) Poisson structure on $T(T^*Q/G)$. Thus, $T(T^*Q/G)$ is a Poisson manifold. Moreover, if $L:TQ \to \mathbb{R}$ is a $G$-invariant Lagrangian function and $l:  TQ/G \to \mathbb{R}$ is the reduced Lagrangian function then $(\Lambda(dl(TQ/G))$ is not, in general, a submanifold of $T(T^*Q/G)$. Note that $\Lambda$ is not, in general, a diffeomorphism.

Even though  one may find in \cite{GrGrUr} an elegant way to describe the Lagrange-Poincar\'e and Hamilton-Poincar\'e equations, it seems natural to preserve the symplectic nature of the Tulczyjew triple after reduction.  Since  the morphism $\Xi\colon J^{-1}_{TT^*Q}(0)/G\to T(T^*Q/G)$ relates both Tulczyjew's triples, we have conveniently made use of it to relate the reduced dynamics, as described in~\cite{GrGrUr}, with the reduced Lagrangian submanifolds.

The following diagram illustrates the relation between the two triples.
\[
\xymatrix{
&&&T(T^*Q/G)\ar `_d[rrr] `[ddd]_{\tau_{T^*Q/G}} [rddd]\ar `^d[lll] `[dddd]^{T[\pi_Q]}  [dddd]
&&&\\
&T^*(TQ/G)\ar[ddr]_{\pi_{TQ/G}}\ar[urr]^{\Lambda}&&&& T^*(T^*Q/G)\ar[ddl]^{\pi_{T^*Q/G}}\ar[ull]_{\sharp_{T^*Q/G}}& \\
&&&J^{-1}_{TT^*Q}(0)/G\ar[ull]_{\varphi_0 \circ [(A_Q)_0]}\ar[urr]^{\Psi_0\circ [(\emph{b}_{\omega_Q})_0]}\ar[uu]^{\Xi}\ar[dl]_{[(T\pi_Q)_0]}\ar[dr]^{[(\tau_{T^*Q})_0]}&&\\
&& TQ/G\ar[dr]^{\widetilde{T\emph{p}_Q}}&&T^*Q/G&&\\
&&&T(Q/G)&&&
}
\]
Using some results from~\cite{LeMaMa}, one may also deduce that solutions of the Hamilton-Poincar\'e (respectively, Lagrange-Poincar\'e) equations are in one-to-one correspondence with admissible curves in a Lagrangian submanifold  of a symplectic Lie algebroid. We remark, however, that symplectic Lie algebroids cannot be considered genuine symplectic manifolds. It would therefore be of interest to compare these two different approaches.

We would also like to extend the results in this paper to classical field theories of first order. For this purpose, we can use the description in~\cite{CaGuMa} (see also \cite{Gra, GuMa,LeMaSa}) of these theories in terms of Lagrangian submanifolds of premultisymplectic manifolds and a suitable process of reduction of some special premultisymplectic manifolds. This will be the subject of a forthcoming paper (see~\cite{CapGuMa}).

The reduction methods we have applied so far mainly made use of the symmetry, and not so much of the fact that symmetry can sometimes be related to conservations laws (using e.g. Noether's theorem). Two reduction techniques which do make use of conservations laws are so-called Routh reduction (on the Lagrangian side) and cotangent bundle reduction (on the Hamiltonian side). In the papers~\cite{CaLaVa, LaMeVa, MaRaSc} the close relation between these two theories have been brought in the spotlight, completely within a symplectic framework. It would therefore be of interest to investigate whether these two reduction theories can also be cast within a framework of a Tulczyjew triple.

\end{document}